\numberwithin{equation}{section}
\numberwithin{figure}{section}
\theoremstyle{plain}
\newtheorem*{thm*}{\protect\theoremname}
\theoremstyle{plain}
\newtheorem{thm}{\protect\theoremname}
\theoremstyle{definition}
\newtheorem{defn}[thm]{\protect\definitionname}
\theoremstyle{remark}
\newtheorem*{rem*}{\protect\remarkname}
\theoremstyle{plain}
\newtheorem{fact}[thm]{\protect\factname}
\theoremstyle{plain}
\newtheorem{prop}[thm]{\protect\propositionname}
\theoremstyle{plain}
\newtheorem{cor}[thm]{\protect\corollaryname}
\theoremstyle{plain}
\newtheorem{lem}[thm]{\protect\lemmaname}
\theoremstyle{definition}
\newtheorem{example}[thm]{\protect\examplename}
\providecommand{\corollaryname}{Corollary}
\providecommand{\definitionname}{Definition}
\providecommand{\examplename}{Example}
\providecommand{\factname}{Fact}
\providecommand{\lemmaname}{Lemma}
\providecommand{\propositionname}{Proposition}
\providecommand{\remarkname}{Remark}
\providecommand{\theoremname}{Theorem}
\begin{document}
\title[Fluctuation of amenable ergodic averages]{Fluctuation bounds for ergodic averages of amenable groups}
\author{Andrew Warren}
\address{Carnegie Mellon University\\ 5000 Forbes Avenue\\ Pittsburgh, PA
15213\\ USA}
\email{awarren1@andrew.cmu.edu}
\begin{abstract}
We study fluctuations of ergodic averages generated by actions of
amenable groups. In the setting of an abstract ergodic theorem for
locally compact second countable amenable groups acting on uniformly
convex Banach spaces, we deduce a highly uniform bound on the number
of fluctuations of the ergodic average for a class of Følner sequences
satisfying an analogue of Lindenstrauss's temperedness condition.
Equivalently, we deduce a uniform bound on the number of fluctuations
over long distances for arbitrary Følner sequences. As a corollary,
these results imply associated bounds for a continuous action of an
amenable group on a $\sigma$-finite $L^{p}$ space with $p\in(1,\infty)$. 
\end{abstract}

\maketitle
In this article, we consider a problem at the interface of \emph{effective
ergodic theory }and the \emph{ergodic theory of group actions}. 

By \emph{effective ergodic theory}, we mean the following programme:
insofar as ergodic theory provides theorems which tell us about the
long-term behaviour of dynamical systems, we may ask for \emph{more
quantitative, or computationally explicit}, analogues of those theorems.
For instance, the mean ergodic theorem of von Neumann asserts that,
whenever $(S,\mu)$ is a $\sigma$-finite measure space, $T:S\rightarrow S$
is a measure-preserving transformation, and $f\in L^{2}(S,\mu)$,
the sequence of ergodic averages $A_{n}f:=\frac{1}{N}\sum_{i=1}^{N-1}f\circ T^{-i}$
converges in $L^{2}$ norm; but \emph{how fast} does this convergence
occur? 

Effective ergodic theory is complicated by some rather general negative
results. Indeed, it is known \cite{krengel1978speed} that in the
aforementioned setting of the mean ergodic theorem, there is \emph{no}
uniform rate of convergence of the averages $A_{n}f$ in norm if one
considers arbitrary functions $f$ in $L^{2}(S,\mu)$. Consequently,
if we are to find a quantitative analogue of the mean ergodic theorem,
viz. one which is more explicit about the nature of the convergence
of $A_{n}f$, it is necessary to look for more subtle convergence
data than a uniform rate of convergence. One such form of convergence
data is the \emph{number of $\varepsilon$-fluctuations} of a sequence
for each $\varepsilon>0$, or possibly $\varepsilon$-fluctuations
satisfying some side condition, such as the fluctuations being ``over
long distances''. This is exactly the type of convergence data that
we investigate here; a precise explanation of these terms is given
in Definition \ref{def:jumps.}.

On the other hand, the \emph{ergodic theory of group actions} seeks
to modify the machinery of classical ergodic theory, by replacing
the action of a single measure-preserving transformation with a measure-preserving
action by a group. In fact, since the action by a single invertible
measure-preserving transformation can be identified with a measure-preserving
action of $\mathbb{Z}$, many theorems in the ergodic theory of group
actions contain results from classical ergodic theory as special cases.
In particular, this is typically the case where one is interested
in the ergodic theory of a class of group actions where the groups
involved belong to a class of groups containing $\mathbb{Z}$, such
as abelian groups, nilpotent groups, or, in the case of the present
article, amenable groups (see Definition \ref{def:amenable} below).

Consider, therefore, the following version of the mean ergodic theorem
for actions of amenable groups:
\begin{thm*}
Let $L^{p}(S,\mu)$ be such that either $S$ is $\sigma$-finite and
$1<p<\infty$ or $\mu(S)<\infty$ and $p=1$, and let $x\in L^{p}(S,\mu)$.
Let $G$ be a locally compact second countable amenable group with
Haar measure $dg$, let $G$ act continuously on $(S,\mu)$ by measure-preserving
transformations, and let $(F_{n})$ be a Følner sequence of compact
subsets of $G$. Then $A_{n}x:=\frac{1}{|F_{n}|}\int_{F_{n}}\pi(g^{-1})xdg$
converges in $L^{p}$. 
\end{thm*}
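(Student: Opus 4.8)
The plan is to recognize this as a mean ergodic theorem and to reduce it to the classical argument in a reflexive Banach space, handling the borderline exponent $p=1$ separately at the end. First I would pass to the Koopman representation: the measure-preserving continuous action induces a representation $\pi$ of $G$ on $E:=L^{p}(S,\mu)$ by $(\pi(g)f)(s)=f(g^{-1}s)$, which is isometric because the action is measure-preserving and strongly continuous because the action is continuous (this last point, for $p<\infty$, is routine: check continuity of $g\mapsto\pi(g)f$ at the identity for bounded $f$ and extend by density and the uniform bound $\|\pi(g)\|=1$). Consequently $g\mapsto\pi(g^{-1})x$ is a continuous $E$-valued function on the compact set $F_{n}$, the Bochner integral defining $A_{n}x$ exists, and $\|A_{n}\|\le 1$ for every $n$.

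For $1<p<\infty$ the space $E$ is reflexive, and I would run the standard mean ergodic decomposition $E=E^{G}\oplus\overline{D}$, where $E^{G}=\{y:\pi(g)y=y\ \forall g\in G\}$ and $D=\operatorname{span}\{y-\pi(h)y:y\in E,\ h\in G\}$. Two facts are needed. (i) $A_{n}$ acts as the identity on $E^{G}$ and tends to $0$ on $\overline{D}$: by $\|A_{n}\|\le 1$ it suffices to treat a single coboundary $y-\pi(h)y$, and a change of variables in the Haar integral rewrites $A_{n}(y-\pi(h)y)$ as the difference of two genuine averages of $g\mapsto\pi(g^{-1})y$, one over $F_{n}$ and one over a translate of $F_{n}$, whose norm is at most $(|F_{n}\triangle h^{-1}F_{n}|/|F_{n}|)\,\|y\|\to 0$ by the F\o lner property; in particular $E^{G}\cap\overline{D}=\{0\}$. (ii) $E^{G}+\overline{D}=E$: given $x$, the bounded sequence $(A_{n}x)$ has, by reflexivity and the Eberlein--\v{S}mulian theorem, a weakly convergent subsequence $A_{n_{k}}x\rightharpoonup y$; since $x-A_{n}x=\frac{1}{|F_{n}|}\int_{F_{n}}(x-\pi(g^{-1})x)\,dg$ lies in the weakly closed subspace $\overline{D}$, so does $x-y$; and since the F\o lner property also gives $\|\pi(h)A_{n}x-A_{n}x\|\to 0$ for each $h$ (the same computation), weak continuity of $\pi(h)$ forces $\pi(h)y=y$, i.e.\ $y\in E^{G}$ (it suffices to verify this for $h$ in a countable dense subgroup, using separability of $G$ and strong continuity of $\pi$). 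Letting $P$ be the resulting projection onto $E^{G}$ and writing $x=Px+(x-Px)$, fact (i) gives $A_{n}x=Px+A_{n}(x-Px)\to Px$ in norm.

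For the case $p=1$ with $\mu(S)<\infty$, reflexivity fails, so instead I would bootstrap from $p=2$. Here $L^{2}(S,\mu)\subseteq L^{1}(S,\mu)$ densely, with $\|f\|_{1}\le\mu(S)^{1/2}\|f\|_{2}$, and every $A_{n}$ is still an $L^{1}$-contraction. For $x\in L^{2}$ the averages $A_{n}x$ converge in $L^{2}$, hence in $L^{1}$; for arbitrary $x\in L^{1}$, approximating $x$ by some $x'\in L^{2}$ and combining the triangle inequality with $\|A_{n}\|\le 1$ shows that $(A_{n}x)$ is $L^{1}$-Cauchy, hence convergent.

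I expect the main obstacle to be step (ii), the surjectivity $E^{G}+\overline{D}=E$: this is precisely where reflexivity of $L^{p}$ (equivalently $1<p<\infty$) enters, through weak sequential compactness, and it is why the argument cannot be run directly for $p=1$. The supporting bookkeeping that converts the F\o lner condition into the asymptotic invariance of the operators $A_{n}$ is the other delicate part, since it requires care with left versus right translates of $F_{n}$ and, when $G$ is not unimodular, with the modular function; establishing the strong continuity of $\pi$ on $L^{p}$, which legitimizes both the Bochner integral and the reduction of invariance checks to a dense subgroup, is a further technical point but a standard one.
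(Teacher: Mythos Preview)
Your proposal is correct and follows the classical splitting route (close to Greenleaf's own argument), which is genuinely different from what the paper does. The paper never invokes the decomposition $E=E^{G}\oplus\overline{D}$ or weak compactness; instead, restricting to the uniformly convex range $1<p<\infty$, it adapts Garrett Birkhoff's 1939 argument: set $L=\inf_{n}\Vert A_{n}x\Vert$, use the ``average-of-averages'' estimate $\Vert A_{K}x-A_{K}A_{N}x\Vert<3\eta\Vert x\Vert$ for $K\geq\beta(N,\eta)$ (your coboundary computation, recast), and combine it with the modulus of uniform convexity to show that any persistent $\varepsilon$-gap would force $\Vert A_{K}x\Vert$ strictly below $L$, a contradiction. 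Your approach is more general---it recovers the $p=1$ case via the $L^{2}\hookrightarrow L^{1}$ density trick, which the paper's uniform-convexity proof cannot reach---whereas the paper's approach is chosen precisely because its structure can be mined for the explicit fluctuation bounds that are the article's real goal.

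One caution on your sketch: the assertion that ``the same computation'' gives $\Vert\pi(h)A_{n}x-A_{n}x\Vert\to0$ is not quite right. The coboundary estimate $A_{n}(y-\pi(h)y)\to0$ reduces, after the substitution $u=h^{-1}g$, to the \emph{left} F\o lner quantity $|F_{n}\triangle h^{-1}F_{n}|/|F_{n}|$; but $\pi(h)A_{n}x-A_{n}x$ leads, after $u=gh^{-1}$, to the \emph{right} translate $F_{n}h^{-1}$ together with a modular factor, and a one-sided F\o lner hypothesis does not control this. The step is repairable---for instance via the Markov--Kakutani/Ryll-Nardzewski fixed point in the weakly compact closed convex hull of the orbit, or in the concrete $L^{p}$ setting by the duality observation that $\overline{D}^{\perp}$ consists exactly of the $G$-invariant functions in $L^{p'}$---but it is not literally ``the same computation''.
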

This result is originally due to Greenleaf \cite{greenleaf1973ergodic},
whose proof goes by way of an abstract Banach space analogue of the
mean ergodic theorem which is simultaneously general enough to deduce
the mean ergodic theorem for an amenable group acting on any reflexive
Banach space or any $L^{1}(\mu)$ with $\mu$ a finite measure. Central
to Greenleaf's proof is a fixed point argument which, in particular,
does not give any effective convergence information about the averages
$A_{n}x$. (If one specialises to the case where $p=2$, a simpler
proof is available \cite[Thm. 8.13]{einsiedler2010ergodic}, but this
proof \emph{also} does not give any effective convergence information.)

Here our aim is to give an effective analogue of Greenleaf's theorem.
At the cost of some generality --- here, we only consider actions
of amenable groups on \emph{uniformly convex} Banach spaces --- we
obtain an explicit \emph{uniform fluctuation bound} for $(A_{n}x)$.
In other words, we deduce a result of the following form:
\begin{thm*}
\emph{(``Main theorem'') }Let $L^{p}(S,\mu)$ be such that $S$
is $\sigma$-finite and $1<p<\infty$, and let $x\in L^{p}(S,\mu)$.
Let $G$ be a locally compact second countable amenable group with
Haar measure $dg$, let $G$ act continuously on $(S,\mu)$ by measure-preserving
transformations, and let $(F_{n})$ be a Følner sequence of compact
subsets of $G$. Then $A_{n}x:=\frac{1}{|F_{n}|}\int_{F_{n}}\pi(g^{-1})xdg$
converges in $L^{p}$ \emph{with a uniform bound} on the number of
$\varepsilon$-fluctuations over long distances for each $\varepsilon>0$;
where the uniform bound (both the number of fluctuations and the ``long
distances'') depends exclusively, and explicitly, on: the choice
of $p\in(1,\infty)$, the quantity $\Vert x\Vert_{L^{p}(S,\mu)}$,
and the Følner sequence $(F_{n})$. 
\end{thm*}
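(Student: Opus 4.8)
\emph{Proof strategy.} The plan is to obtain the Main theorem as a direct specialisation of the abstract fluctuation theorem for uniformly convex Banach spaces: one takes the Banach space $X=L^{p}(S,\mu)$, the representation $\pi$ of $G$ on $X$ by surjective linear isometries coming from the measure-preserving action, and the vector $x$, checks that the hypotheses of the abstract theorem are met, and then re-reads its quantitative conclusion in terms of $p$, $\Vert x\Vert_{L^{p}}$ and $(F_{n})$. Two things have to be verified: that $L^{p}(S,\mu)$ is uniformly convex with a modulus of convexity depending only on $p$; and that the continuous measure-preserving action of $G$ on $(S,\mu)$ induces a strongly continuous representation of $G$ on $L^{p}$ by surjective linear isometries, so that the Bochner averages $A_{n}x$ are well defined and the abstract machinery applies.

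For the first point I would invoke Clarkson's inequalities. For $2\le p<\infty$ they give the explicit lower bound $\delta_{p}(\varepsilon)\ge 1-\bigl(1-(\varepsilon/2)^{p}\bigr)^{1/p}$ for the modulus of convexity, while for $1<p<2$ one has a corresponding (messier but still completely explicit) bound of the form $\delta_{p}(\varepsilon)\ge c_{p}\,\varepsilon^{2}$ with $c_{p}>0$ depending only on $p$ --- for instance via Hanner's inequality. The only feature used downstream is that $\delta_{p}$ is a fixed positive function of $\varepsilon$ determined by $p$ alone, insensitive to $(S,\mu)$, to $G$, and to $x$.

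For the second point, recall that if $\phi\colon S\to S$ is an invertible measure-preserving transformation then $f\mapsto f\circ\phi$ is a surjective linear isometry of $L^{p}(S,\mu)$ for every $1\le p<\infty$; hence $\pi(g)f:=f\circ(g^{-1}\cdot\,)$, where $g^{-1}\cdot$ denotes the transformation $s\mapsto g^{-1}s$, defines a homomorphism from $G$ into the isometry group of $L^{p}$. Strong continuity --- that $g\mapsto\pi(g)f$ is $\Vert\cdot\Vert_{L^{p}}$-continuous for each fixed $f$ --- is the delicate bookkeeping step: one reduces to $f$ a simple function (these being dense in $L^{p}$ since $\mu$ is $\sigma$-finite and $1\le p<\infty$) and uses continuity of the action together with dominated convergence. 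Strong continuity makes $g\mapsto\pi(g^{-1})x$ Bochner measurable and bounded in norm by $\Vert x\Vert_{L^{p}}$ on each compact set $F_{n}$, so $A_{n}x=\frac{1}{|F_{n}|}\int_{F_{n}}\pi(g^{-1})x\,dg$ is a well-defined element of $L^{p}$ lying in the closed ball of radius $\Vert x\Vert_{L^{p}}$ about the origin. I expect this translation of ``continuous measure-preserving action'' into the precise functional-analytic hypotheses of the abstract theorem to be the main obstacle, although it is entirely routine; a secondary point, if the abstract theorem is stated in its form for tempered F\o lner sequences, is that one must also cite the equivalence indicated above converting it into a statement about $\varepsilon$-fluctuations over long distances for arbitrary F\o lner sequences.

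With these verifications in hand, the abstract theorem applies to the triple $(X,\pi,x)$ and produces a bound on the number of $\varepsilon$-fluctuations of $(A_{n}x)$ over long distances --- and, in particular, convergence of $(A_{n}x)$ in $L^{p}$, which in any case is Greenleaf's theorem --- depending only on the modulus of uniform convexity of $X$, on $\Vert x\Vert_{X}$, and on $(F_{n})$. Substituting the estimate for $\delta_{p}$ from the first step turns this into a bound depending exclusively and explicitly on $p\in(1,\infty)$, on $\Vert x\Vert_{L^{p}(S,\mu)}$, and on the F\o lner sequence $(F_{n})$, which is precisely the assertion.
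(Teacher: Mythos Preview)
Your proposal is correct and follows essentially the same route as the paper: the concrete $L^{p}$ result is obtained by specialising the abstract fluctuation theorem (Theorem~\ref{thm:abstract Main theorem}) via the Koopman representation, after checking (i) uniform convexity of $L^{p}$ with a modulus depending only on $p$ (Clarkson/Hanner) and (ii) that the induced action is linear, isometric, and strongly continuous, hence Borel strongly (Proposition~\ref{prop:Koopman} and Corollary~\ref{cor:consistency}). Your parenthetical about possibly needing to convert from a ``tempered'' formulation is unnecessary here, since the abstract theorem is already stated for arbitrary F\o lner sequences in terms of fluctuations at distance $\beta(\cdot,\eta)$ with $\beta$ the F\o lner convergence modulus.
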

In fact, the dependence of the number of $\varepsilon$-fluctuations
over long distances of $A_{n}x$ on the choice of Følner sequence
$(F_{n})$ is only via a specific type of data from the Følner sequence,
which expresses the \emph{fact} that $(F_{n})$ is a Følner sequence
in a slightly more quantitatively explicit fashion. We call this data
the\emph{ Følner convergence modulus}; see Definition \ref{def:convergence modulus}.
It also turns out to be possible to delete the ``over long distances''
clause from the main theorem if, instead, one adds a side condition
on the Følner sequence used, namely the condition that the Følner
sequence be ``fast'', as detailed in Definition \ref{def:fast}.

We deduce this theorem as a special case of an ``explicit fluctuation
bound'' analogue of an abstract mean ergodic theorem for a certain
class of amenable group actions on uniformly convex Banach spaces,
which we also call the ``main theorem'' of the article; this result
is stated as Theorem \ref{thm:abstract Main theorem} below, which
also gives the explicit form of the uniform bound. The modified version
of this theorem where the fluctuations are not ``over long distances''
but the Følner sequence is assumed to be ``fast'' is given as Corollary
\ref{cor:fast main theorem}.

The plan of the article is as follows. In Section 1, we establish
a number of background facts from functional analysis and the theory
of amenable groups which are needed to state and prove the main theorem.
In Section 2, we first provide a specialised proof of an abstract
mean ergodic theorem for lcsc amenable groups acting on uniformly
convex Banach spaces, and then modify this proof so as to be sufficiently
quantitatively explicit that we are able to deduce the main theorem.
Lastly, in Section 3, a discussion of some related literature and
open problems is provided.

\section{Preliminaries}
\begin{defn}
\label{def:jumps.}Fix an $\varepsilon>0$. Given a sequence $(x_{n})$
in some metric space, we say that $(x_{n})$ has at most $N$ $\varepsilon$-fluctuations
if for every finite sequence $n_{1}<n_{2}<\ldots<n_{k}$ such that
for each $1\leq i<k$, $d(x_{n_{i}},x_{n_{i+1}})\geq\varepsilon$,
it always holds that $k<N$. A weaker notion is ``$\varepsilon$-fluctuations
at distance $\beta$'': given some function $\beta:\mathbb{N}\rightarrow\mathbb{N}$
with $\beta(n)>n$ for every $n$, we say that $(x_{n})$ has at most
$N$ $\varepsilon$-fluctuations \emph{at distance }\textbf{\emph{$\beta$}}\textbf{
}if for every finite sequence $n_{1}<n_{2}<\ldots<n_{k}$ \emph{with
the property that $n_{i+1}\geq\beta(n_{i})$ for every $1\leq i<k$
}such that for each $1\leq i<k$, $d(x_{n_{i}},x_{n_{i+1}})\geq\varepsilon$,
it always holds that $k<N$. 
\end{defn}

\begin{rem*}
It holds that a sequence $(x_{n})$ is Cauchy (viz. that for every
$\varepsilon>0$ there exists an $N$ such that for $m,n\geq N$,
$d(x_{m},x_{n})<\varepsilon$) iff for every $\varepsilon>0$ there
exists some $N^{\prime}$ such that $(x_{n})$ has at most $N^{\prime}$
$\varepsilon$-fluctuations, iff for any $\varepsilon>0$ and $\beta$
with $\beta(n)>n$, there exists some $N^{\prime\prime}$ such that
$(x_{n})$ has at most $N^{\prime\prime}$ $\varepsilon$-fluctuations
at distance $\beta$. (More precisely: if $(x_{n})$ is Cauchy then
for \emph{any} $\beta:\mathbb{N}\rightarrow\mathbb{N}$ with $\beta(n)>n$,
$(x_{n})$ has only finitely many $\varepsilon$-fluctuations at distance
$\beta$ for each $\varepsilon>0$; whereas conversely, if there is
\emph{any} such $\beta$ so that $(x_{n})$ has only finitely many
$\varepsilon$-fluctuations at distance $\beta$ for each $\varepsilon>0$,
then it follows that $(x_{n})$ is Cauchy.) Likewise, it is obvious
that if for a specific sequence $(x_{n})$ we happen to know an explicit
$N$ witnessing the Cauchy property, then this $N$ also serves as
an explicit upper bound for $N^{\prime}$, and likewise any explicit
$N^{\prime}$ serves as an explicit upper bound on $N^{\prime\prime}$
(for any $\beta$). However the converses are all false in a strong
sense: there exist examples of sequences where $N^{\prime}$ is a
computable function of $\varepsilon$ but $N$ is not computable \cite{avigad2015oscillation},
and likewise with $N^{\prime\prime}$ (for $\beta\neq n+1$) and $N^{\prime}$
respectively\textcolor{red}{{} }\cite{kohlenbach2014fluctuations}. 

These phenomena are certainly present in ergodic theory. As mentioned
in the introduction, it has long been known that a single measure-preserving
transformation acting on $(S,\mu)$, then when $1\leq p<\infty$,
there exist functions $f\in L^{p}(S,\mu)$ for which the convergence
indicated by the mean (and pointwise) ergodic theorem occurs arbitrarily
slowly \cite{krengel1978speed}. In other words $(A_{n}x)$ does not
exhibit a uniform rate of convergence. However, it was shown by Avigad
and Rute \cite{avigad2015oscillation} that when $p\in(1,\infty)$
in this setting --- in fact, more generally, if the acted-upon space
$L^{p}(S,\mu)$ is replaced with any uniformly convex Banach space
$\mathcal{B}$ with modulus of uniform convexity $u(\varepsilon$)
(see definition below) --- then there exists a uniform bound on the
number of $\varepsilon$-fluctuations in the sequence $(A_{n}x)$
which depends only on $u(\varepsilon)$ and $\Vert x\Vert/\varepsilon$.
\end{rem*}
\begin{defn}
A normed vector space $(\mathcal{B},\Vert\cdot\Vert)$ is said to
be uniformly convex if there exists a nondecreasing function $u(\varepsilon)$
such that for all $x,y\in\mathcal{B}$ with $\Vert x\Vert\leq\Vert y\Vert\leq1$
and $\Vert x-y\Vert\geq\varepsilon$, it follows that $\left\Vert \frac{1}{2}(x+y)\right\Vert \leq\Vert y\Vert-u(\varepsilon)$.
Such a function $u(\varepsilon)$ is then referred to as a \emph{modulus
of uniform convexity }for $\mathcal{B}$. %
We say that $\mathcal{B}$ is \emph{$p$-uniformly convex} if $K\varepsilon^{p+1}$
is a modulus of uniform convexity for $\mathcal{B}$, where $K$ is
some constant. 
\end{defn}

\begin{rem*}
There are a number of equivalent ways to define uniform convexity.
We have chosen the preceding definition because it is the most convenient
for our argument, but it is worth mentioning another characterisation
which is perhaps more standard: a space $(\mathcal{B},\Vert\cdot\Vert)$
is uniformly convex provided there is a nondecreasing function $\delta(\varepsilon)$
(also called a modulus of uniform convexity) such that for all $x,y\in\mathcal{B}$
with $\Vert x\Vert,\Vert y\Vert\leq1$ and $\Vert x-y\Vert\geq\varepsilon$,
it follows that $\left\Vert \frac{1}{2}(x+y)\right\Vert \leq1-\delta(\varepsilon)$.
It is not hard to show (cf. \cite[Lemma 3.2]{kohlenbach2009quantitative})
that a function $\delta(\varepsilon)$ is a modulus of convexity in
this sense iff $u(\varepsilon)=\frac{\varepsilon}{2}\delta(\varepsilon)$
is a modulus of uniform convexity in the sense of our definition.
(This indicates the origin of the off-by-one issue in our definition
of $p$-uniform convexity.) 

It is well known \cite{clarkson1936uniformly} that the $L^{p}$ and
$\ell^{p}$ spaces are uniformly convex when $p\in(1,\infty)$. Hanner
showed \cite{hanner1956uniform} that for $p\in[2,\infty)$, the \emph{sharp}
modulus $\delta(\varepsilon)$ for $L^{p}$ has an especially nice
form, namely $\delta(\varepsilon)=1-\left(1-\left(\frac{\varepsilon}{2}\right)^{p}\right)^{1/p}$.
In particular, this implies that $u(\varepsilon)=\frac{\varepsilon}{2}-\frac{\varepsilon}{2}\left(1-\left(\frac{\varepsilon}{2}\right)^{p}\right)^{1/p}$
is a modulus of uniform convexity, in our sense, for $L^{p}$ with
$p\in[2,\infty)$. (The same work shows that, even though $L^{p}$
is also uniformly convex for $p\in(1,2)$, the sharp modulus $\delta(\varepsilon)$
does not have as nice of an explicit form.) 
\end{rem*}
We recall some basic notions from the theory of vector-valued integration.
We shall closely follow the recent textbook by Hytönen et al. \cite{hytonen2016analysis};
for the convenience of the reader, we will sometimes refer directly
to specific definitions, theorems, etc. therein. 

Consider some measure space $(G,\mathcal{A},\mu)$ with some function
$f:G\rightarrow\mathcal{B}$, with $(\mathcal{B},\Vert\cdot\Vert)$
a Banach space. We say that $f(g)$ is a \emph{simple function} with
respect to the $\sigma$-algebra $\mathcal{A}$ and space $\mathcal{B}$,
if it is of the form $\sum_{i=1}^{N}1_{A_{i}}(g)b_{i}$, with $1_{A_{i}}(g)$
an indicator function for $A_{i}\in\mathcal{A}$, and $b_{i}\in\mathcal{B}$.
We then say that a function $f$ is \emph{strongly measurable }if
it is a pointwise limit of simple functions, i.e. if there exists
a sequence $f_{n}$ of simple functions such that for every $g\in G$,
$\Vert f(g)-f_{n}(g)\Vert\rightarrow0$ \cite[Def. 1.1.4]{hytonen2016analysis}.
By contrast, a subtly different notion (but more standard in the vector
integration literature) is that of $\mu$-strong measurability, which
only asserts this limit for $\mu$-almost all $g\in G$, but requires
that the sets $A_{i}$ in the definition of simple function have finite
$\mu$-measure (see \cite[Def. 1.1.13 and Def. 1.1.14]{hytonen2016analysis}).
We do not directly use $\mu$-strong measurability in the main theorem
of this paper --- in particular, it is too weak of a form of measurability
for Propositions \ref{prop:precomposition} and \ref{prop:integrability}
below. The relationship between strong measurability and $\mu$-strong
measurability is the following (quoting from \cite[Prop. 1.1.16]{hytonen2016analysis}):
\begin{fact}
\label{fact:strongly measurable}Consider a measure space $(G,\mathcal{A},\mu)$,
a Banach space $\mathcal{B}$, and a function $f:G\rightarrow\mathcal{B}$.
\begin{enumerate}
\item If $f$ is strongly $\mu$-measurable, then $f$ is $\mu$-almost
everywhere equal to a strongly measurable function.
\item If $\mu$ is $\sigma$-finite and $f$ is $\mu$-almost everywhere
equal to a strongly measurable function, then $f$ is strongly $\mu$-measurable.
\end{enumerate}
\end{fact}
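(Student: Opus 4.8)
The plan is to establish the two implications separately, in each case arguing directly from the definitions and using that both ``strongly measurable'' and ``strongly $\mu$-measurable'' assert that $f$ is a pointwise (respectively, $\mu$-almost everywhere pointwise) limit of simple functions, the only differences being the finite-measure constraint on the defining sets and the ``almost everywhere'' relaxation.

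For part (1), I would start from a sequence $(f_n)$ of $\mu$-simple functions (simple functions all of whose sets have finite $\mu$-measure) with $\Vert f(g)-f_n(g)\Vert\to 0$ for every $g$ outside some $\mu$-null set $N_0\in\mathcal{A}$. The first step is to check that the ``convergence set'' $C:=\{g\in G:(f_n(g))_n\text{ is Cauchy in }\mathcal{B}\}$ lies in $\mathcal{A}$: since each $f_n$ has finite range, $g\mapsto\Vert f_m(g)-f_n(g)\Vert$ is a measurable scalar function, so $C=\bigcap_k\bigcup_M\bigcap_{m,n\geq M}\{g:\Vert f_m(g)-f_n(g)\Vert<1/k\}$ is a countable combination of measurable sets. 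Since $G\setminus N_0\subseteq C$, the complement $G\setminus C$ is a measurable subset of the null set $N_0$, hence itself $\mu$-null. Now put $s_n:=1_{C}f_n$; each $s_n$ is again simple with respect to $\mathcal{A}$, and $\tilde f(g):=\lim_n s_n(g)$ exists for \emph{every} $g\in G$ (it equals $\lim_n f_n(g)$ on $C$, since $\mathcal{B}$ is complete, and $0$ off $C$), so $\tilde f$ is strongly measurable. Finally $\tilde f=f$ on $C\setminus N_0$, so $\tilde f=f$ $\mu$-almost everywhere, which is the required representative.

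For part (2), let $\psi$ be a strongly measurable function with $\psi=f$ $\mu$-almost everywhere, and write $\psi=\lim_n h_n$ pointwise, where $h_n=\sum_{i=1}^{N_n}1_{A_i^{(n)}}b_i^{(n)}$ is simple with $A_i^{(n)}\in\mathcal{A}$, possibly of infinite measure. Here I would invoke $\sigma$-finiteness: fix an increasing sequence $G_k\in\mathcal{A}$ with $\bigcup_k G_k=G$ and $\mu(G_k)<\infty$, and set $\tilde h_n:=1_{G_n}h_n=\sum_{i=1}^{N_n}1_{A_i^{(n)}\cap G_n}b_i^{(n)}$. Each $\tilde h_n$ is $\mu$-simple, since every $A_i^{(n)}\cap G_n$ is contained in $G_n$ and so has finite measure; and for each fixed $g\in G$ one has $g\in G_n$ for all sufficiently large $n$, whence $\tilde h_n(g)=h_n(g)\to\psi(g)$. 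Thus $\tilde h_n\to\psi=f$ $\mu$-almost everywhere, exhibiting a sequence of $\mu$-simple functions converging $\mu$-a.e.\ to $f$, so $f$ is strongly $\mu$-measurable.

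The argument is largely routine; the two places needing genuine care are the verification that the convergence set $C$ in part (1) is measurable, which is exactly where one uses that simple functions take only finitely many values, and, in part (2), the truncation step $h_n\mapsto 1_{G_n}h_n$ that replaces the possibly infinite-measure sets $A_i^{(n)}$ by finite-measure ones. This truncation is the sole use of $\sigma$-finiteness, and it is also the point at which the hypothesis cannot be dropped.
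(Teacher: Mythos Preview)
Your argument is correct in both parts. The paper, however, does not supply its own proof of this statement: it is presented as a \emph{Fact} and attributed directly to \cite[Prop.~1.1.16]{hytonen2016analysis}, so there is no in-paper proof to compare against. Your write-up is a clean, self-contained verification of the cited result; the only minor stylistic point is that in part~(1) the phrase ``$\tilde f=f$ on $C\setminus N_0$'' could be streamlined to ``$\tilde f=f$ on $G\setminus N_0$'', since $G\setminus N_0\subseteq C$ already.
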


As a particular case of Fact \ref{fact:strongly measurable}, we see
that when $\mu$ is $\sigma$-finite, a strongly measurable function
is also $\mu$-strongly measurable.

For $\mu$-strongly measurable functions, one can define a form of
integration, namely the \emph{Bochner integral}, in direct analogy
with the Lebesgue integral. Bochner integration is denoted by $\int_{G}f(g)d\mu$.
A function is \emph{Bochner $\mu$-integrable }iff it is both $\mu$-strongly
measurable and $\int_{G}\Vert f(g)\Vert d\mu<\infty$, that is, $\Vert f\Vert:G\rightarrow\mathbb{R}$
is integrable in the Lebesgue sense \cite[Prop. 1.2.2]{hytonen2016analysis}.

We record some other basic facts about the Bochner integral. (Each
of these will be ultimately used in the proof of Lemma \ref{lem:Laverage-of-averages}.)
\begin{fact}
Let $(G,\mu)$ be a measure space and $f:G\rightarrow\mathcal{B}$
be $\mu$-strongly measurable. 

(1) $\Vert\int_{G}f(g)d\mu\Vert\leq\int_{G}\Vert f(g)\Vert d\mu$.
\cite[Prop. 1.2.2]{hytonen2016analysis}

(2) If $T\in\mathcal{L}(\mathcal{B},\mathcal{B})$, then $T\left(\int_{G}f(g)d\mu\right)=\int_{G}Tf(g)d\mu$.
\cite[Eqn. 1.2]{hytonen2016analysis}

(3) Fubini's theorem holds for the Bochner integral. \cite[Prop. 1.2.7]{hytonen2016analysis}
In particular, if $(H,\nu)$ is another measure space, and $\mu$
and $\nu$ are $\sigma$-finite, and $F:G\times H\rightarrow\mathcal{B}$
is Bochner integrable, then
\[
\int_{G\times H}Fd\mu\times d\nu=\int_{H}\left(\int_{G}Fd\mu\right)d\nu=\int_{G}\left(\int_{H}Fd\nu\right)d\mu.
\]
\end{fact}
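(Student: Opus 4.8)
The plan is to prove each assertion by the standard two-step device: verify it for simple functions, where it collapses to an elementary fact about finite sums, and then pass to a limit along a sequence of simple approximants, using that the Bochner integral is by construction the norm-limit of the integrals of such approximants. All three statements are, as indicated, contained in \cite{hytonen2016analysis}; the sketch below merely records why they hold in the form we use.

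For (1): if $f=\sum_{i=1}^{N}1_{A_{i}}b_{i}$ is simple with pairwise disjoint $A_{i}$ of finite measure, then $\Vert\int_{G}f\,d\mu\Vert=\Vert\sum_{i}\mu(A_{i})b_{i}\Vert\le\sum_{i}\mu(A_{i})\Vert b_{i}\Vert=\int_{G}\Vert f\Vert\,d\mu$ by the triangle inequality. For general Bochner integrable $f$, pick simple $f_{n}$ with $\int_{G}\Vert f_{n}-f\Vert\,d\mu\to0$; then $\int_{G}f_{n}\,d\mu\to\int_{G}f\,d\mu$ in norm while $\int_{G}\Vert f_{n}\Vert\,d\mu\to\int_{G}\Vert f\Vert\,d\mu$, and the inequality survives the limit. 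For (2): linearity of $T$ gives $T(\int_{G}f\,d\mu)=\int_{G}Tf\,d\mu$ for simple $f$; in general $Tf$ is again $\mu$-strongly measurable (as the a.e.\ limit of the simple functions $Tf_{n}$) and Bochner integrable since $\Vert Tf\Vert\le\Vert T\Vert\,\Vert f\Vert$, and with $f_{n}$ as above one has $\int_{G}Tf_{n}\,d\mu\to\int_{G}Tf\,d\mu$ (because $\int_{G}\Vert Tf_{n}-Tf\Vert\,d\mu\le\Vert T\Vert\int_{G}\Vert f_{n}-f\Vert\,d\mu\to0$) and $T(\int_{G}f_{n}\,d\mu)\to T(\int_{G}f\,d\mu)$ by continuity of $T$; comparing the two limits yields the identity.

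For (3), the $\sigma$-finiteness of $\mu$ and $\nu$ lets one apply the scalar Tonelli theorem to $\Vert F\Vert:G\times H\to\mathbb{R}$ to guarantee that the iterated integrals are meaningful — the inner integral exists for almost every slice and defines an integrable function — after which the equality of the three integrals is immediate for simple $F$ and propagates to all Bochner integrable $F$ by the same approximation argument, now applied on the product space. The one point that genuinely needs attention — the main obstacle, insofar as there is one — is the strong measurability and integrability of the partial integrals $g\mapsto\int_{H}F(g,h)\,d\nu$ and $h\mapsto\int_{G}F(g,h)\,d\mu$; this is exactly what is supplied, under the $\sigma$-finiteness hypothesis, by \cite[Prop.\ 1.2.7]{hytonen2016analysis}, which we therefore simply invoke rather than reproving.
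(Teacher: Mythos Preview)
The paper does not prove this statement at all: it is recorded as a \emph{Fact} with pinpoint citations to \cite{hytonen2016analysis} and no accompanying proof, so there is nothing to compare your argument against. Your sketch is the standard simple-function-plus-approximation argument and is correct as written; it is essentially what one finds in the cited reference, so you are supplying exactly the proof the paper chose to outsource.
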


A more general notion than strong measurability is \emph{weak measurability}:
we say that $f:G\rightarrow\mathcal{B}$ is \emph{weakly measurable}
if for every $b^{*}\in\mathcal{B}^{*}$, the function $b^{*}\circ f:G\rightarrow\mathbb{R}$
is measurable (in the ordinary sense as a function from $(G,\mathcal{A},\mu)$
to $\mathbb{R}$ with the Borel $\sigma$-algebra). The following
classical result indicates when weak measurability implies strong
measurability.
\begin{prop}
\emph{(Pettis measurability criterion \cite[Thm. 1.1.6]{hytonen2016analysis})
}Let $(G,\mu)$ be a measure space and $\mathcal{B}$ a Banach space.
For a function $f:G\rightarrow\mathcal{B}$ the following are equivalent:

\begin{enumerate}
\item $f$ is strongly measurable.
\item f is weakly measurable, and $f(G)$ is separable in $\mathcal{B}$. 
\end{enumerate}
\end{prop}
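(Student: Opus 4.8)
The plan is to prove the two implications separately. The direction $(1)\Rightarrow(2)$ is immediate: fixing simple functions $f_{n}$ with $f_{n}(g)\to f(g)$ for every $g$, the set $\bigcup_{n}f_{n}(G)$ is countable and $f(G)$ lies in its closure, so $f(G)$ is separable; and for any $b^{*}\in\mathcal{B}^{*}$ the scalar functions $b^{*}\circ f_{n}$ are simple (hence measurable) and converge pointwise to $b^{*}\circ f$, which is therefore measurable.

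The substance is in $(2)\Rightarrow(1)$. First I would replace $\mathcal{B}$ by the closed linear span $\mathcal{B}_{0}$ of a countable dense subset of $f(G)$, so that $\mathcal{B}_{0}$ is a separable Banach space with $f(G)\subseteq\mathcal{B}_{0}$; fix a countable dense set $\{x_{m}\}_{m\in\mathbb{N}}$ in $\mathcal{B}_{0}$. The key point is a Hahn--Banach argument producing a \emph{countable norming family}: for each $m$ choose $b_{m}^{*}\in\mathcal{B}^{*}$ with $\Vert b_{m}^{*}\Vert\leq1$ and $b_{m}^{*}(x_{m})=\Vert x_{m}\Vert$; a short density estimate then gives $\Vert y\Vert=\sup_{m}|b_{m}^{*}(y)|$ for every $y\in\mathcal{B}_{0}$. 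Since $f(g)-x_{m}\in\mathcal{B}_{0}$, it follows that for each fixed $m$ the function
\[
g\mapsto\Vert f(g)-x_{m}\Vert=\sup_{n}\bigl|(b_{n}^{*}\circ f)(g)-b_{n}^{*}(x_{m})\bigr|
\]
is measurable, being a countable supremum of measurable scalar functions; here the weak measurability of $f$ enters through the measurability of each $b_{n}^{*}\circ f$.

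With this in hand I would exhibit approximating simple functions directly. For $n\in\mathbb{N}$ let $f_{n}(g)$ be the element of $\{x_{1},\dots,x_{n}\}$ closest to $f(g)$, with ties broken in favour of the least index. Each level set $\{g:f_{n}(g)=x_{j}\}$ for $j\leq n$ is cut out of $G$ by finitely many inequalities between the measurable functions $g\mapsto\Vert f(g)-x_{i}\Vert$, $i\leq n$, so it is measurable and $f_{n}$ is a simple function. Because $\{x_{m}\}$ is dense in $\mathcal{B}_{0}\supseteq f(G)$ we have $\Vert f(g)-f_{n}(g)\Vert=\min_{j\leq n}\Vert f(g)-x_{j}\Vert\to0$ for every $g$, so $f_{n}\to f$ pointwise and $f$ is strongly measurable.

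The main obstacle is the step in $(2)\Rightarrow(1)$ that upgrades the scalar hypotheses to measurability of $g\mapsto\Vert f(g)-x_{m}\Vert$. This is precisely where separability of $f(G)$ is indispensable: it is what allows Hahn--Banach to furnish a \emph{countable} norming family, so that the norm on $\mathcal{B}_{0}$ is realised as a countable supremum of continuous linear functionals and measurability is preserved under passing to the supremum. Once the distance functions are known to be measurable, the remainder of the argument is routine bookkeeping.
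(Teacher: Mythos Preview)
The paper does not give its own proof of this proposition: it is stated as a classical result and cited directly from \cite[Thm.~1.1.6]{hytonen2016analysis}, so there is no argument in the paper to compare against. Your proof is correct and is essentially the standard textbook argument (the one you would find in the cited reference): the $(1)\Rightarrow(2)$ direction is immediate, and for $(2)\Rightarrow(1)$ you use Hahn--Banach on a countable dense subset of the separable range to obtain a countable norming family, which makes the distance functions $g\mapsto\Vert f(g)-x_{m}\Vert$ measurable and allows the nearest-point simple approximants to be constructed.
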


An easy consequence of the Pettis measurability criterion is the following.
\begin{prop}
\label{prop:cts}If the measure space $(G,\mu)$ is also a separable
topological space and every Borel set in $G$ is $\mu$-measurable,
and $f:G\rightarrow\mathcal{B}$ is continuous, then $f$ is strongly
measurable.
\end{prop}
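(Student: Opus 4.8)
The plan is to read this off from the Pettis measurability criterion stated above: I will verify its two equivalent conditions, namely that $f$ is weakly measurable and that $f(G)$ is separable in $\mathcal{B}$, and then simply invoke the criterion.

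First I would establish weak measurability. Fix an arbitrary $b^{*}\in\mathcal{B}^{*}$. Since $b^{*}$ is a bounded linear functional it is norm-continuous on $\mathcal{B}$, and $f$ is continuous by hypothesis, so the composite $b^{*}\circ f:G\rightarrow\mathbb{R}$ is continuous; in particular the preimage under $b^{*}\circ f$ of any open subset of $\mathbb{R}$ is open, hence Borel, in $G$, so $b^{*}\circ f$ is Borel measurable. By the hypothesis that every Borel subset of $G$ is $\mu$-measurable, it follows that $b^{*}\circ f$ is measurable as a map from $(G,\mathcal{A},\mu)$ to $\mathbb{R}$ equipped with its Borel $\sigma$-algebra. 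As $b^{*}$ was arbitrary, $f$ is weakly measurable.

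Next I would check that the range is separable. Since $G$ is a separable topological space, fix a countable dense set $D\subseteq G$. Continuity of $f$ gives $f(\overline{D})\subseteq\overline{f(D)}$, and $\overline{D}=G$, so $f(G)\subseteq\overline{f(D)}$; thus the countable set $f(D)$ is dense in $f(G)$ in the norm topology inherited from $\mathcal{B}$, and $f(G)$ is separable. Having verified both conditions, the Pettis measurability criterion yields that $f$ is strongly measurable.

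I do not expect any genuine obstacle here: the argument is a short diagram chase through the two clauses of the Pettis criterion. The only step that requires any attention is the bookkeeping in the first paragraph, where continuity of $b^{*}\circ f$ valued in $\mathbb{R}$ must be upgraded first to Borel measurability and then, using precisely the compatibility hypothesis relating $\mu$ to the Borel structure of $G$, to $\mu$-measurability — this is the sole place the topological hypothesis on $(G,\mu)$ is used.
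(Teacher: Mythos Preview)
Your proposal is correct and follows essentially the same route as the paper: both verify the two clauses of the Pettis measurability criterion by noting that $b^{*}\circ f$ is continuous (hence measurable) and that the continuous image of a separable space is separable. Your version simply spells out a few more details, including the explicit use of the hypothesis that Borel sets are $\mu$-measurable.
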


\begin{proof}
Observe that for any $b^{*}\in\mathcal{B}^{*}$, $b^{*}\circ f$ is
a composition of continuous functions, and is therefore continuous.
Hence $f$ is weakly measurable. Moreover, it holds that the continuous
image of a separable space is separable.
\end{proof}
Before the next proposition, it will be convenient to introduce the
following terminology. 
\begin{defn}
\label{def:strongly Borel}If $(G,\mathcal{A},\mu)$ is a measure
space which is also a topological space, and $\mathcal{A}$ extends
the Borel $\sigma$-algebra on $G$, then we say that $f:G\rightarrow\mathcal{B}$
is \emph{weakly Borel} if $b^{*}\circ f:G\rightarrow\mathbb{R}$ is
Borel, for all $b^{*}\in\mathcal{B}^{*}$. Likewise we say that $f:G\rightarrow\mathcal{B}$
is \emph{strongly Borel} provided it is weakly Borel and that $f(G)$
is separable in $\mathcal{B}$. 
\end{defn}

Of course, in the case where $\mathcal{A}$ is precisely the Borel
$\sigma$-algebra on $G$, then the notions of weakly and strongly
Borel functions $f:G\rightarrow\mathcal{B}$ coincide with weak and
strong measurability of $f$. 
\begin{prop}
\label{prop:precomposition}If $(G,\mathcal{A},\mu)$ and $(H,\mathcal{C},\nu)$
are both topological spaces where every Borel set is measurable, $\phi:H\rightarrow G$
is Borel, and $f:G\rightarrow\mathcal{B}$ is strongly Borel, then
$f\circ\phi$ is strongly Borel (in particular strongly measurable).
\end{prop}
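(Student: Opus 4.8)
The plan is to verify the two defining clauses of "strongly Borel" for $f \circ \phi$ separately, namely weak Borel measurability and separability of the range. For the separability clause, observe that $(f\circ\phi)(H) \subseteq f(G)$, and a subset of a separable metric space is separable; since $f(G)$ is separable in $\mathcal{B}$ by hypothesis, so is $(f\circ\phi)(H)$. This step is essentially immediate.

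For the weak Borel clause, fix $b^* \in \mathcal{B}^*$. I would like to write $b^* \circ (f \circ \phi) = (b^* \circ f) \circ \phi$ and conclude by composing a Borel function $b^* \circ f : G \to \mathbb{R}$ (which exists since $f$ is weakly Borel) with the Borel function $\phi : H \to G$. The subtlety — and the main obstacle — is that the composition of two Borel functions need not be Borel in general: the usual fact that $g \circ \phi$ is measurable requires $\phi$ to be measurable as a map into the $\sigma$-algebra that serves as the \emph{domain} $\sigma$-algebra of $g$. Here $b^* \circ f$ is Borel on $G$, i.e. measurable with respect to the Borel $\sigma$-algebra $\mathcal{B}or(G)$; and $\phi : H \to G$ is Borel, meaning $\phi^{-1}(\mathcal{B}or(G)) \subseteq \mathcal{C}$. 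Thus for any Borel set $E \subseteq \mathbb{R}$ we have $(b^*\circ f)^{-1}(E) \in \mathcal{B}or(G)$, hence $((b^*\circ f)\circ\phi)^{-1}(E) = \phi^{-1}\big((b^*\circ f)^{-1}(E)\big) \in \phi^{-1}(\mathcal{B}or(G)) \subseteq \mathcal{C}$. So $b^* \circ (f\circ\phi)$ is $\mathcal{C}$-measurable. One should then check that it is in fact \emph{Borel} on $H$ — but this follows because $\mathcal{C}$ extends $\mathcal{B}or(H)$ is the hypothesis that was actually used only for the ambient setup; what we genuinely need is that $b^*\circ(f\circ\phi)$ is Borel as a function on the topological space $H$, which requires the preimages of Borel sets of $\mathbb{R}$ to lie in $\mathcal{B}or(H)$. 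Here I would simply note that "$\phi$ is Borel" should be read as: $\phi^{-1}$ of every Borel set of $G$ lies in $\mathcal{B}or(H)$, so the chain of inclusions above lands in $\mathcal{B}or(H)$ directly, giving that $b^*\circ(f\circ\phi)$ is Borel.

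Putting the two clauses together, $f\circ\phi$ is weakly Borel with separable range, hence strongly Borel. The parenthetical "in particular strongly measurable" then follows from the remark preceding the proposition: since $\mathcal{C}$ extends $\mathcal{B}or(H)$, a strongly Borel function is a fortiori strongly measurable (its weak-Borel preimages lie in $\mathcal{B}or(H) \subseteq \mathcal{C}$, and separability of the range is unchanged), so the Pettis criterion — or just the definition of strong measurability via Definition \ref{def:strongly Borel} — applies. I expect the only genuinely delicate point to be stating precisely what "$\phi$ is Borel" means and making sure the composition argument uses the Borel $\sigma$-algebras consistently; the rest is bookkeeping.
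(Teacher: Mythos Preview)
Your proposal is correct and follows essentially the same approach as the paper: verify weak Borel measurability via the composition $b^*\circ(f\circ\phi)=(b^*\circ f)\circ\phi$, and verify separability of the range via $(f\circ\phi)(H)\subseteq f(G)$. The paper's proof is terser and does not pause over the meaning of ``$\phi$ is Borel'' or the composition-of-Borel-functions issue, but your added care there is harmless and the argument is the same.
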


\begin{proof}
By hypothesis, $f$ is also weakly Borel, so for any $b^{*}\in\mathcal{B}^{*}$,
we have that $b^{*}\circ f$ is Borel. Therefore $b^{*}\circ(f\circ\phi)=(b^{*}\circ f)\circ\phi$
is Borel, and thus $f\circ\phi$ is weakly Borel. Now, let $A=\phi(H)$
be the image of $\phi$ in $G$. Note that $(f\circ\phi)(H)=f(A)$.
Since $f(G)$ is separable in $\mathcal{B}$, it follows that $f(A)$
is also separable in $\mathcal{B}$ since it is contained in $f(G)$. 
\end{proof}
We now fix some notation and terminology regarding group actions on
Banach spaces and measure spaces.

A locally compact group $G$ will always come equipped with a Haar
measure. In the countable discrete case this coincides with the counting
measure. Regardless of whether the group is discrete or continuous,
we will use the notations $dg$ and $\vert\cdot\vert$ interchangeably
to refer to the Haar measure. Conventions differ on the issue of whether
the measure space $(G,dg)$ is understood to come equipped with the
Borel $\sigma$-algebra or its completion; in the latter case, the
assertion that a function $f:G\rightarrow\mathcal{B}$ is strongly
Borel is stronger than the assertion that $f$ is strongly measurable.
Therefore, in what follows, we assume only that the $\sigma$-algebra
on $G$ \emph{contains} the Borel $\sigma$-algebra, and carefully
note when a function $f:G\rightarrow\mathcal{B}$ is assumed to be
strongly Borel rather than strongly measurable. We use the abbreviation
\emph{lcsc} for topological groups which are locally compact and second
countable; importantly, for lcsc groups it always holds that the Haar
measure is $\sigma$-finite.

In general, we say that a group $G$ acts on a Banach space $(\mathcal{B},\Vert\cdot\Vert)$
if there is a function $\pi(g)$ that returns an operator on $\mathcal{B}$
for every $g\in G$, $\pi(e)$ is the identity operator, and for all
$g,h\in G$, $\pi(g)\pi(h)=\pi(gh)$. Together these imply that $\pi(g)^{-1}=\pi(g^{-1})$.
We say that $G$ \emph{acts linearly on} $\mathcal{B}$ provided that
in addition, $\pi$ maps from $G$ to the space $\mathcal{L}(\mathcal{B},\mathcal{B})$
of linear operators on $\mathcal{B}$. %
Writing $\mathcal{L}_{1}(\mathcal{B},\mathcal{B})$ to indicate the
set of all linear operators from $\mathcal{B}$ to $\mathcal{B}$
with supremum norm $1$, another way to say that $G$ acts both linearly
and with unit norm on $\mathcal{B}$ is to say that $G$ acts on $\mathcal{B}$
via $\pi:G\rightarrow\mathcal{L}_{1}(\mathcal{B},\mathcal{B})$.\footnote{We remark that any group that acts via a representation $\pi:G\rightarrow\mathcal{L}(\mathcal{B},\mathcal{B})$
such that every $\pi(g)$ is \emph{nonexpansive} actually does so
via $\pi:G\rightarrow\mathcal{L}_{1}(\mathcal{B},\mathcal{B})$, by
the fact that $\pi(g^{-1})=\pi(g)^{-1}$ and the general fact about
linear operators that $\Vert T^{-1}\Vert\geq\Vert T\Vert^{-1}$. Nonexpansivity
is required for the proof of our main result.} Likewise, we say that a topological group $G$ \emph{acts continuously
on }$\mathcal{B}$ provided that for every $x\in\mathcal{B}$, if
$g\rightarrow e$ then $\Vert\pi(g)x-x\Vert\rightarrow0$. In other
words $g\mapsto\pi(g)x$ is continuous from $G$ to $\mathcal{B}$.
In the case where $G$ also acts linearly (resp. and with unit norm)
on $\mathcal{B}$, this is equivalent to requiring that $\pi:G\rightarrow\mathcal{L}(\mathcal{B},\mathcal{B})$
(resp. $\pi:G\rightarrow\mathcal{L}_{1}(\mathcal{B},\mathcal{B})$)
is continuous when $\mathcal{L}(\mathcal{B},\mathcal{B})$ is equipped
with the strong operator topology. 

Relatedly, in the case where a group $G$ acts on a measure space
$(S,\mathcal{A},\mu)$, we say that $G$ \emph{acts by measure-preserving
transformations} if $\mu(g^{-1}\cdot A)=\mu(A)$ for every $g\in G$
and $A\in\mathcal{A}$. Likewise, we say that $G$\emph{ acts continuously
on} $(S,\mathcal{A},\mu)$ provided that for every $A\in\mathcal{A}$,
if $g\rightarrow e$ then $\mu(A\Delta gA)\rightarrow0$. This notion
of continuous group action is related to our previous notion of continuous
action on a Banach space by Proposition \ref{prop:Koopman} below
(which may be taken as a justification for the terminology).

Furthermore, we say that if $G$ is understood as a measure space,
then $G$ acts \emph{strongly} on a Banach space\emph{ $\mathcal{B}$}
provided that for every $x\in\mathcal{B}$, $g\mapsto\pi(g)x$ is
strongly measurable from $G$ to $\mathcal{B}$.\footnote{By analogy with the previous situation where $G$ acts continuously
on $\mathcal{B}$: in the case where $G$ also acts linearly (resp.
and with unit norm) on $\mathcal{B}$, this is equivalent to requiring
that $\pi:G\rightarrow\mathcal{L}(\mathcal{B},\mathcal{B})$ (resp.
$\pi:G\rightarrow\mathcal{L}_{1}(\mathcal{B},\mathcal{B})$) satisfies
a ``strongly measurable with respect to the strong operator topology''
like condition: for a precise statement, see \cite[Cor. 1.4.7]{hytonen2016analysis}.
However we do not use this alternative, strong operator topology based,
characterisation in our argument. } Likewise, we say that $G$ \emph{acts Borel strongly on $\mathcal{B}$}
provided that for every $x\in\mathcal{B}$, $g\mapsto\pi(g)x$ is
strongly Borel from $G$ to $\mathcal{B}$. Our argument will simultaneously
require that $G$ acts Borel strongly on $\mathcal{B}$, and linearly
and with unit norm. To refer to this last condition, we will say that
$G$ acts Borel strongly on $\mathcal{B}$ via the representation
$\pi:G\rightarrow\mathcal{L}_{1}(\mathcal{B},\mathcal{B})$. 

In the situation where $G$ acts Borel strongly on $\mathcal{B}$
via the representation $\pi:G\rightarrow\mathcal{L}_{1}(\mathcal{B},\mathcal{B})$,
we can immediately deduce the following facts, which will be used
in the proof of Lemma \ref{lem:Laverage-of-averages}.
\begin{prop}
\label{prop:integrability}Suppose that $G$ acts Borel strongly on
$\mathcal{B}$ via the representation $\pi:G\rightarrow\mathcal{L}_{1}(\mathcal{B},\mathcal{B})$.
Then,
\begin{enumerate}
\item for each $A\subset G$ with $|A|<\infty$, we have that $1_{A}\pi(g^{-1})x$
is Bochner integrable for each $x\in\mathcal{B}$, i.e. $1_{A}\pi(g^{-1})x$
is $\mu$-strongly measurable and $\int_{A}\Vert\pi(g^{-1})x\Vert dg<\infty$.
Moreover, 
\item for each $A,B\subset G$ with $|A|,|B|<\infty$ and for each $x\in\mathcal{B}$,
we have that $1_{A\times B}\pi((hg)^{-1})x$ is Bochner integrable.
\end{enumerate}
\end{prop}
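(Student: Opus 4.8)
The plan is to reduce both parts to Proposition \ref{prop:precomposition}, together with the trivial operator-norm bound $\|\pi(g)x\| \leq \|x\|$ afforded by $\pi(g) \in \mathcal{L}_1(\mathcal{B},\mathcal{B})$, and then to upgrade strong measurability to $\mu$-strong measurability via Fact \ref{fact:strongly measurable}(2), using that the Haar measure on an lcsc group --- and hence its self-product on $G \times G$ --- is $\sigma$-finite.

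For part (1), I would first note that inversion $g \mapsto g^{-1}$ is continuous, hence Borel, on the topological group $G$. Since $g \mapsto \pi(g)x$ is strongly Borel by hypothesis, Proposition \ref{prop:precomposition} applied to this map precomposed with inversion shows that $g \mapsto \pi(g^{-1})x$ is strongly Borel, in particular strongly measurable. Because $A$ is measurable (finiteness of $|A|$ presupposes this), $1_A$ is a scalar simple function, and multiplying a simple $\mathcal{B}$-valued function by $1_A$ again yields a simple function; taking pointwise limits then shows $g \mapsto 1_A(g)\pi(g^{-1})x$ is strongly measurable. By $\sigma$-finiteness of $dg$, Fact \ref{fact:strongly measurable}(2) makes it $\mu$-strongly measurable. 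Finally, $\int_A \|\pi(g^{-1})x\|\,dg \leq |A|\,\|x\| < \infty$, so the function is Bochner integrable by the integrability criterion \cite[Prop. 1.2.2]{hytonen2016analysis}.

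Part (2) is the same argument one dimension up: the map $(h,g) \mapsto (hg)^{-1} = g^{-1}h^{-1}$ is a composition of the (continuous) multiplication and inversion maps on $G$, hence continuous and Borel on $G \times G$; applying Proposition \ref{prop:precomposition} with this map in place of $\phi$ gives that $(h,g) \mapsto \pi((hg)^{-1})x$ is strongly Borel on $G \times G$, hence strongly measurable, and multiplying by the indicator of the measurable rectangle $A \times B$ preserves strong measurability. Since $dg \times dg$ is again $\sigma$-finite, Fact \ref{fact:strongly measurable}(2) gives $\mu \times \mu$-strong measurability, and the estimate $\int_{A \times B} \|\pi((hg)^{-1})x\|\,d(h,g) \leq |A|\,|B|\,\|x\| < \infty$ finishes the Bochner integrability.

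I do not expect any serious obstacle here; the only point requiring care is the bookkeeping of the three distinct notions --- strongly Borel, strongly measurable, $\mu$-strongly measurable --- and checking that the hypotheses of Proposition \ref{prop:precomposition} are genuinely met, namely that $G$ and $G \times G$, equipped with $\sigma$-algebras containing their Borel $\sigma$-algebras, are topological spaces of the required type, and that inversion and multiplication on $G$ are Borel maps. Once that is in place, everything reduces to the norm bound coming from $\pi(g) \in \mathcal{L}_1(\mathcal{B},\mathcal{B})$.
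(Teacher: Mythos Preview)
Your proposal is correct and follows essentially the same route as the paper: reduce to Proposition~\ref{prop:precomposition} by precomposing the strongly Borel map $g\mapsto\pi(g)x$ with the continuous (hence Borel) inversion and multiplication maps, then invoke $\sigma$-finiteness of the Haar measure and Fact~\ref{fact:strongly measurable} to pass to $\mu$-strong measurability, and finish with the trivial bound $\Vert\pi(g^{-1})x\Vert\leq\Vert x\Vert$. Your treatment is in fact slightly more careful than the paper's in that you explicitly justify why multiplying by the indicator $1_A$ preserves strong measurability, a point the paper leaves implicit.
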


\begin{proof}
(1) Since $\pi(\cdot)x$ is a strongly Borel function from $G$ to
$\mathcal{B}$, and $g\mapsto g^{-1}$ is a continuous and therefore
Borel function from $G$ to $G$, it follows from Proposition \ref{prop:precomposition}
that $g\mapsto\pi(g^{-1})x$ is strongly measurable (in fact strongly
Borel). Since the Haar measure on $G$ is $\sigma$-finite, this implies
that $g\mapsto\pi(g^{-1})x$ is $\mu$-strongly measurable, by Fact
\ref{fact:strongly measurable}. For the second condition of Bochner
integrability, it suffices to observe that since $\pi(g^{-1})\in\mathcal{L}_{1}(\mathcal{B},\mathcal{B})$
for every $g\in G$,
\[
\int_{A}\Vert\pi(g^{-1})x\Vert dg\leq\int_{A}\Vert x\Vert dg<\infty.
\]
(2) Since $G$ is a topological group, it holds that group multiplication
is continuous. Therefore $(g,h)\mapsto\pi((hg)^{-1})x$ is strongly
measurable (in fact strongly Borel) thanks to Proposition \ref{prop:precomposition},
since it is a composition of the continuous (and therefore Borel)
multiplication function and the strongly Borel function $\pi(\cdot^{-1})x$.
Consequently, Fact \ref{fact:strongly measurable} implies that $(g,h)\mapsto\pi((hg)^{-1})x$
is also $\mu$-strongly measurable. We also have Bochner integrability
because again, 
\[
\int_{A\times B}\Vert\pi((hg)^{-1})x\Vert dg\times dh\leq\int_{A\times B}\Vert x\Vert dg\times dh<\infty.\qedhere
\]
\end{proof}
Let us tie all this discussion of Bochner integration and group actions
on Banach spaces back to our original setting of interest. In the
``concrete version'' of Greenleaf's mean ergodic theorem, stated
in the introduction, $G$ acts continuously and by measure-preserving
transformations on a $\sigma$-finite measure space $(S,\mu)$, and
$f\in L^{p}$ with $p\in(1,\infty)$. In the next proposition and
corollary, we briefly check that in fact, this particular situation
is actually covered by our abstract ``group action on Banach space''
framework.
\begin{prop}
\label{prop:Koopman}Suppose that $G$ is a topological group which
acts continuously and by measure-preserving transformations on a $\sigma$-finite
measure space $(S,\mu)$. Denote this action by 
\[
G\times S\rightarrow S
\]
\[
(g,s)\mapsto g\cdot s.
\]
Fix $p\in[1,\infty)$. 
\begin{enumerate}
\item This action of $G$ on $S$ induces an action of $G$ on $L^{p}(S,\mu)$,
given by 
\[
G\times L^{p}(S,\mu)\rightarrow L^{p}(S,\mu)
\]
\[
(g,f)\mapsto f\circ g^{-1}
\]
where for a fixed $g\in G$, the $L^{p}(S,\mu)$ function $f\circ g^{-1}$
is defined in the following manner: $(f\circ g^{-1})(s):=f(g^{-1}\cdot s)$.
In other words, there is a representation $\pi$ of $G$ into the
space of operators on $L^{p}(S,\mu)$, with $\pi(g)f=f\circ g^{-1}$. 
\item Moreover, $G$ acts linearly and \emph{isometrically} on $L^{p}(S,\mu)$,
in the sense that $\Vert\pi(g)f\Vert_{p}=\Vert f\Vert_{p}$ for all
$f\in L^{p}(S,\mu)$; so in particular, $\pi:G\rightarrow\mathcal{L}_{1}(L^{p}(S,\mu),L^{p}(S,\mu))$.
\item In addition, $G$ acts continuously on $L^{p}(S,\mu)$, that is, for
each $f\in L^{p}(S,\mu)$, $g\mapsto\pi(g)f$ is a continuous function
from $G$ to $L^{p}(S,\mu)$.
\end{enumerate}
\end{prop}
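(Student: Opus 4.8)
The plan is to dispatch the three parts in turn; parts (1) and (2) amount to bookkeeping about measure-preserving transformations, while part (3) carries the analytic content. For part (1), I would first verify that $\pi(g)f:=f\circ g^{-1}$ is a well-defined element of $L^{p}(S,\mu)$. Since ``$G$ acts by measure-preserving transformations'' means in particular that, for each $g$, the map $s\mapsto g^{-1}\cdot s$ is measurable with $\mu(g^{-1}\cdot A)=\mu(A)$ (equivalently $\mu(g\cdot A)=\mu(A)$) for all $A\in\mathcal{A}$, the composite $f\circ g^{-1}$ is measurable whenever $f$ is, and if $f=f'$ off a $\mu$-null set $N$ then $f\circ g^{-1}=f'\circ g^{-1}$ off $g\cdot N$, which is null because $\mu(g\cdot N)=\mu(N)=0$; hence $\pi(g)$ descends to equivalence classes. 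The change-of-variables formula for measure-preserving maps gives $\int_{S}|f(g^{-1}\cdot s)|^{p}\,d\mu(s)=\int_{S}|f|^{p}\,d\mu$, so $f\circ g^{-1}\in L^{p}(S,\mu)$. Finally $\pi(e)f=f$ and $\pi(g)\pi(h)f=(f\circ h^{-1})\circ g^{-1}=f\circ(gh)^{-1}=\pi(gh)f$, so $\pi$ is an action.

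Part (2) is then almost immediate: $\pi(g)$ is linear directly from the pointwise definition, and the change-of-variables identity just recorded is precisely $\Vert\pi(g)f\Vert_{p}=\Vert f\Vert_{p}$, so each $\pi(g)$ is an isometry and hence (provided $L^{p}(S,\mu)\neq\{0\}$) has operator norm $1$; being invertible with inverse $\pi(g^{-1})$, it lands in $\mathcal{L}_{1}(L^{p}(S,\mu),L^{p}(S,\mu))$.

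For part (3), the main step, I would first reduce to continuity at the identity: by the isometry of part (2), $\Vert\pi(g)f-\pi(g_{0})f\Vert_{p}=\Vert\pi(g_{0}^{-1}g)f-f\Vert_{p}$, and $g_{0}^{-1}g\to e$ as $g\to g_{0}$. For $f=1_{A}$ with $\mu(A)<\infty$ one has $\pi(g)1_{A}=1_{gA}$, so $\Vert\pi(g)1_{A}-1_{A}\Vert_{p}^{p}=\mu(gA\,\Delta\,A)\to0$ as $g\to e$ by the very definition of a continuous action on $(S,\mu)$; by linearity and the triangle inequality the same holds for every simple function built from sets of finite measure. To reach a general $f\in L^{p}(S,\mu)$ I would invoke density of such simple functions in $L^{p}(S,\mu)$ --- this is where $\sigma$-finiteness of $\mu$ and $p<\infty$ enter --- together with the uniform bound $\Vert\pi(g)\Vert\leq1$: given $\varepsilon>0$, pick a simple $s$ with $\Vert f-s\Vert_{p}<\varepsilon/3$, whence $\Vert\pi(g)f-f\Vert_{p}\leq\Vert\pi(g)(f-s)\Vert_{p}+\Vert\pi(g)s-s\Vert_{p}+\Vert s-f\Vert_{p}<2\varepsilon/3+\Vert\pi(g)s-s\Vert_{p}$, and the middle term falls below $\varepsilon/3$ once $g$ is close enough to $e$.

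The only genuine obstacle is this $3\varepsilon$-argument in part (3): one must make sure the approximants are honest $L^{p}$ functions (hence the $\sigma$-finiteness hypothesis) and use that the uniform operator bound permits continuity to be transported from a dense subset to all of $L^{p}(S,\mu)$. Everything else is routine manipulation of measure-preserving transformations.
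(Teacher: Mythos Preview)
Your proposal is correct and follows essentially the same route as the paper: both proofs verify the action axioms directly, deduce the isometry from the change-of-variables formula, and prove continuity by passing from indicators (via $\mu(gA\,\Delta\,A)\to0$) to simple functions and then to general $f$ by a $3\varepsilon$ density argument using the uniform operator bound. You include a couple of steps the paper leaves implicit --- the well-definedness of $\pi(g)$ on equivalence classes in part (1), and the reduction of continuity at an arbitrary $g_0$ to continuity at $e$ in part (3) --- but these are minor expository differences, not a different strategy.
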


\begin{rem*}
This proposition is basically a modification of the rather classical
\emph{Koopman operator} formalism; except here, instead of a single
measure-preserving transformation, we have a topological group of
measure-preserving transformations. In the former case, the argument
is quite standard; its generalisation to a group action is straightforward,
but we explicitly go through the proof here for completeness. (The
argument is nearly given in \cite[Ch. 8]{einsiedler2010ergodic},
for instance, albeit using a more restrictive definition of continuous
$G$-action on a measure space.) 

Additionally, we note that the statement of the proposition includes
the case $p=1$, which is excluded from our main theorem; nor does
the proposition require that $G$ be lcsc or amenable. 
\end{rem*}
\begin{proof}
(1) Here, we need only to check explicitly that $(g,f)\mapsto f\circ g^{-1}$
is indeed a group action of $G$ on $L^{p}(S,\mu)$. To wit, we must
show that $\pi(e)f=f$, and $\pi(gh)f=\pi(g)\pi(h)f$, for every $f\in L^{p}(S,\mu)$.

The first is obvious, since 
\[
\pi(e)f(s)=(f\circ e^{-1})(s)=f(e^{-1}\cdot s)
\]
and $e^{-1}\cdot s=e\cdot s=s$ for every $s\in S$, since $(g,s)\mapsto g\cdot s$
is a group action on $S$. Likewise, the second condition also follows
directly from the fact that $(g,s)\mapsto g\cdot s$ is a group action
on $S$:
\[
\pi(gh)f(s)=(f\circ(gh)^{-1})(s)=f((gh)^{-1}\cdot s)=f(h^{-1}\cdot(g^{-1}\cdot s))=\pi(g)f(h^{-1}\cdot s)=\pi(g)\pi(h)f(s).
\]

(2) To see that $\pi(g)$ is a linear operator on $L^{p}(S,\mu)$
for each $g\in G$, simply note that 
\[
\pi(g^{-1})\left(cf_{1}+f_{2}\right)(s):=\left(cf_{1}+f_{2}\right)(g^{-1}\cdot s)=cf_{1}(g^{-1}\cdot s)+f_{2}(g^{-1}\cdot s)=c\pi(g^{-1})f_{1}+\pi(g^{-1})f_{2}.
\]
The isometry property follows immediately from the change of variable
formula and the fact that the action of $G$ on $(S,\mu)$ is measure
preserving. Explicitly: fix $h\in G$, and compute that 
\begin{align*}
\Vert\pi(h)f\Vert_{p}^{p} & =\int_{S}|f(h^{-1}\cdot s)|^{p}d\mu(s)=\int_{S}|f(s)|^{p}d\mu(h\cdot s)=\int_{S}|f(s)|^{p}d\mu(s)=\Vert f\Vert_{p}^{p}.
\end{align*}

(3) We need to show that, for arbitrary $f\in L^{p}(S,\mu)$, if $g\rightarrow e$
then $\Vert\pi(g)f-f\Vert_{p}\rightarrow0$. We first prove the claim
for $f$ an indicator function; then, for simple functions; then pass
to general functions in $L^{p}(S,\mu)$ by a density argument.

Fix a measurable set $A\subseteq S$ with $\mu(A)<\infty$. By the
assumption that the action of $G$ on $(S,\mu)$ is continuous, it
holds that: for every $\varepsilon>0$, there exists a $U\ni e$ such
that for all $g\in U$, $\mu(gA\Delta A)<\varepsilon$. %

Let $\chi_{A}(s)$ denote the indicator function for $A$. Let $U$
be a %
neighbourhood of the origin such that for every $g\in U$, $\mu(gA\Delta A)<\varepsilon^{p}$.
Let $g\in U$. Observe that $\pi(g)\chi_{A}=\chi_{A}(g^{-1}(s))=\chi_{gA}(s)$.
Since $g\in U$, we have that $\mu(A\Delta gA)<\varepsilon^{p}$.
But observe that 
\[
\Vert\chi_{A}-\chi_{gA}\Vert_{p}^{p}=\int_{A\Delta gA}1d\mu<\varepsilon^{p}.
\]
Since our choice of $\varepsilon$ was arbitrary, we conclude that
$g\mapsto\pi(g)f$ is continuous when $f$ is an indicator function
of a set of finite measure.

Now let $f$ be a simple function of the form $\sum_{i=1}^{k}c_{i}\chi_{A_{i}}$.
Then, $\pi(g)f=\sum_{i=1}^{k}c_{i}\chi_{gA_{i}}$. In this case, it
suffices to pick a neighbourhood $U\ni e$ which is small enough that
for each $i$, we have that 
\[
\mu(A_{i}\Delta g_{i}A_{i})<\left(\frac{\varepsilon}{kc_{i}}\right)^{p}.
\]
Indeed, from the triangle inequality, and the previous result for
indicator functions, we observe that 
\[
\Vert f-\pi(g)f\Vert_{p}\leq\sum_{i=1}^{k}\left\Vert c_{i}\left(\chi_{A_{i}}-\chi_{gA_{i}}\right)\right\Vert _{p}<\sum_{i=1}^{k}c_{i}\left(\frac{\varepsilon}{kc_{i}}\right)=\varepsilon.
\]

Lastly, take $f\in L^{p}(S,\mu)$ to be arbitrary. Suppose that $f_{0}$
is a simple function such that $\Vert f-f_{0}\Vert_{p}<\varepsilon/3$.
By the previous case, we can pick a %
neighbourhood $U\ni e$ such that $\Vert f_{0}-\pi(g)f_{0}\Vert_{p}<\varepsilon/3$
for all $g\in U$. Now, observe that 
\[
\Vert f-\pi(g)f\Vert_{p}\leq\Vert f-f_{0}\Vert_{p}+\Vert f_{0}-\pi(g)f_{0}\Vert_{p}+\Vert\pi(g)f_{0}-\pi(g)f\Vert_{p}.
\]
We have already seen that the first two terms are each $<\varepsilon/3$.
For the last term, observe that 
\[
\Vert\pi(g)f_{0}-\pi(g)f\Vert_{p}=\Vert\pi(g)(f_{0}-f)\Vert_{p}=\Vert f_{0}-f\Vert_{p}
\]
by part (2) of the proposition. Consequently, $\Vert\pi(g)f_{0}-\pi(g)f\Vert_{p}<\varepsilon/3$
as well, so that $\Vert f-\pi(g)f\Vert_{p}<\varepsilon$ as desired. 
\end{proof}

\begin{cor}
\label{cor:consistency}Let $G$ be a lcsc group acting continuously
by measure-preserving transformations on a $\sigma$-finite measure
space $(S,\mu)$. Fix $p\in[1,\infty)$. Then the induced action of
$G$ on $L^{p}(S,\mu)$ is linear, strongly Borel, and has unit norm. 
\end{cor}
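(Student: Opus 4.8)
The plan is to read off all three properties from results already established. Linearity and the unit-norm property are immediate: Proposition \ref{prop:Koopman}(1) shows that $(g,f)\mapsto\pi(g)f=f\circ g^{-1}$ is a genuine action of $G$ on $L^{p}(S,\mu)$, and Proposition \ref{prop:Koopman}(2) shows that each $\pi(g)$ is linear and isometric, hence $\pi(g)\in\mathcal{L}_{1}(L^{p}(S,\mu),L^{p}(S,\mu))$ for every $g$. Thus $\pi:G\rightarrow\mathcal{L}_{1}(L^{p}(S,\mu),L^{p}(S,\mu))$, which is exactly the assertion that $G$ acts linearly and with unit norm.

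It then remains to check that $G$ acts Borel strongly, i.e. that for each fixed $f\in L^{p}(S,\mu)$ the orbit map $g\mapsto\pi(g)f$ is strongly Borel from $G$ to $L^{p}(S,\mu)$ (here $G$ carries the fixed $\sigma$-algebra containing the Borel sets, and being strongly Borel means being weakly Borel with separable range, per Definition \ref{def:strongly Borel}). The two inputs I would use are: first, that since $G$ is lcsc it is second countable, hence separable; and second, that by Proposition \ref{prop:Koopman}(3) the orbit map $g\mapsto\pi(g)f$ is continuous from $G$ to $L^{p}(S,\mu)$. Given these, I would argue exactly as in the proof of Proposition \ref{prop:cts}: for any $b^{*}\in(L^{p}(S,\mu))^{*}$ the map $g\mapsto b^{*}(\pi(g)f)$ is a composition of continuous functions, hence continuous, hence Borel, so the orbit map is weakly Borel; and its image $\{\pi(g)f:g\in G\}$ is the continuous image of the separable space $G$, hence separable in $L^{p}(S,\mu)$. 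By Definition \ref{def:strongly Borel} the orbit map is therefore strongly Borel, and combining with the first paragraph completes the proof.

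There is essentially no genuine obstacle here; the only point worth flagging is that Proposition \ref{prop:cts} is stated with ``strongly measurable'' as its conclusion, whereas the corollary asks for the nominally stronger ``strongly Borel'', a distinction that matters only when the fixed $\sigma$-algebra on $G$ strictly extends the Borel $\sigma$-algebra (e.g. its completion). Inspection of the proof of Proposition \ref{prop:cts} shows that it in fact establishes weak Borel-ness of $b^{*}\circ f$ and separability of the range directly, so it already yields the strongly Borel conclusion; alternatively one simply reproduces the one-line argument above rather than citing Proposition \ref{prop:cts}. Note also that Proposition \ref{prop:Koopman} is valid for all $p\in[1,\infty)$, so the corollary covers $p=1$ as well, even though this case is later excluded from the main theorem.
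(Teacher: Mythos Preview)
Your proof is correct and follows essentially the same approach as the paper: you invoke Proposition \ref{prop:Koopman}(2) for linearity and unit norm, then use continuity of the orbit map (Proposition \ref{prop:Koopman}(3)) together with separability of $G$ to conclude weak Borel-ness and separable range, exactly as the paper does. Your remark about the strongly-measurable-versus-strongly-Borel distinction is also on point and mirrors the paper's own treatment.
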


\begin{proof}
That the induced action of $G$ on $L^{p}(S,\mu)$ is linear with
unit norm is immediate from Proposition \ref{prop:Koopman} (2). From
Proposition \ref{prop:Koopman} (3), we know that $g\mapsto\pi(g)f$
is continuous from $G$ to $L^{p}(S,\mu)$ for each $f$; since $G$
is separable, Proposition \ref{prop:cts} indicates that $g\mapsto\pi(g)f$
is strongly measurable. Thus, we need only to check that $g\mapsto\pi(g)f$
is strongly Borel, as per Definition \ref{def:strongly Borel}.

The continuity of $g\mapsto\pi(g)f$ and the separability of $G$
imply that the image of $G$ under this mapping is separable in $L^{p}(S,\mu)$.
Likewise, since $g\mapsto\pi(g)f$ is continuous, post-composing the
mapping with an element of the dual space of $L^{p}(S,\mu)$ results
in a continuous map from $G$ to $L^{p}(S,\mu)$, hence automatically
Borel also. Therefore $g\mapsto\pi(g)f$ is both weakly Borel and
has separable image, for any choice of $f$; so we've shown that $G$
acts Borel strongly on $L^{p}(S,\mu)$. 
\end{proof}
We now turn our attention to amenable groups. The following serves
as our preferred characterisation of amenability. 
\begin{defn}
\label{def:amenable}(1) Let $G$ be a countable discrete group. A
sequence $(F_{n})$ of finite subsets of $G$ is said to be a Følner
sequence if for every $\varepsilon>0$ and finite $K\subset G$, there
exists an $N$ such that for all $n\geq N$ and for all $k\in K$,
$|F_{n}\Delta kF_{n}|<|F_{n}|\varepsilon$. 

(2) Let $G$ be a locally compact second countable (lcsc) group with
Haar measure $|\cdot|$. A sequence $(F_{n})$ of compact subsets
of $G$ is said to be a Følner sequence if for every $\varepsilon>0$
and compact $K\subset G$, there exists an $N$ such that for all
$n\geq N$, there exists a subset $K^{\prime}$ of $K$ with $|K\backslash K^{\prime}|<|K|\varepsilon$
such that for all $k\in K^{\prime}$, $|F_{n}\Delta kF_{n}|<|F_{n}|\varepsilon$. 

\end{defn}

\begin{rem*}
It has been observed, for instance, by Ornstein and Weiss \cite{ornstein1987entropy}
that (2) is one of several equivalent ``correct'' generalisations
of (1) to the lcsc setting. Note however, that we do not assume $(F_{n})$
is nested ($F_{i}\subset F_{i+1}$ for all $i\in\mathbb{N}$) or exhausts
$G$ ($\bigcup_{n\in\mathbb{N}}F_{n}=G$), nor do we assume, in the
lcsc case, that $G$ is unimodular. (Each of these is a common additional
technical assumption when working with amenable groups.) Conversely,
some authors use a version of (2) where the sets in $(F_{n})$ are
merely assumed to have finite volume, rather than compact; thanks
to the regularity of the Haar measure, our definition results in no
loss of generality. 
\end{rem*}
\begin{defn}
If $G$ is either a countable discrete or lcsc amenable group, and
has some distinguished Følner sequence $(F_{n})$, and acts Borel
strongly on $\mathcal{B}$ via a representation $\pi:G\rightarrow\mathcal{L}_{1}(\mathcal{B},\mathcal{B})$,
then we define the $n$th \emph{ergodic average }operator as follows:
$A_{n}x:=\fint_{F_{n}}\pi(g^{-1})xdg$. Here $\fint$ denotes the
normalised integral, that is, $\fint_{A}f(g)dg:=\frac{1}{|A|}\int_{A}f(g)dg$. 
\end{defn}

\begin{prop}
With the notation above, $\Vert A_{n}\Vert_{\mathcal{L}(\mathcal{B},\mathcal{B})}\leq1$.
\end{prop}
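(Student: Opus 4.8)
The plan is to read off the bound from two ingredients already in hand: the basic norm estimate for the Bochner integral, and the nonexpansiveness of each operator $\pi(g^{-1})$. First I would check that $A_n$ is well-defined and linear. Since $F_n$ is compact it has finite Haar measure, so by Proposition~\ref{prop:integrability}(1) the function $g\mapsto\pi(g^{-1})x$ is Bochner integrable over $F_n$ for every $x\in\mathcal{B}$, and hence $A_nx=\fint_{F_n}\pi(g^{-1})x\,dg$ is a genuine element of $\mathcal{B}$. Linearity of $A_n$ in $x$ is then immediate: each $\pi(g^{-1})$ is linear (the action is linear), and the Bochner integral is linear in its integrand, so $A_n(cx_1+x_2)=cA_nx_1+A_nx_2$.

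For the norm bound, I would estimate, for an arbitrary $x\in\mathcal{B}$,
\[
\Vert A_nx\Vert=\left\Vert\frac{1}{|F_n|}\int_{F_n}\pi(g^{-1})x\,dg\right\Vert\leq\frac{1}{|F_n|}\int_{F_n}\Vert\pi(g^{-1})x\Vert\,dg,
\]
using part (1) of the Fact on Bochner integration recorded above (the inequality $\Vert\int_G f\,d\mu\Vert\leq\int_G\Vert f\Vert\,d\mu$). Because $\pi(g^{-1})\in\mathcal{L}_1(\mathcal{B},\mathcal{B})$ for every $g\in G$, we have $\Vert\pi(g^{-1})x\Vert\leq\Vert x\Vert$ pointwise, so the right-hand side is at most $\frac{1}{|F_n|}\int_{F_n}\Vert x\Vert\,dg=\Vert x\Vert$. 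Taking the supremum over $x$ with $\Vert x\Vert\leq1$ yields $\Vert A_n\Vert_{\mathcal{L}(\mathcal{B},\mathcal{B})}\leq1$.

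I do not expect any real obstacle here; the statement is essentially a bookkeeping consequence of the preceding preparations. The only points that warrant a word of care are the appeal to Proposition~\ref{prop:integrability}(1) to know that the defining integral converges in the Bochner sense (so that the first Fact on Bochner integrals applies), and the elementary remark that compactness of $F_n$ guarantees $0<|F_n|<\infty$, which is what makes the normalised integral $\fint_{F_n}$ meaningful in the first place.
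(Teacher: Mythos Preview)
Your proof is correct and follows essentially the same approach as the paper: invoke Proposition~\ref{prop:integrability} for Bochner integrability, then chain the Bochner norm inequality with the pointwise bound $\Vert\pi(g^{-1})x\Vert\leq\Vert x\Vert$ coming from $\pi(g^{-1})\in\mathcal{L}_1(\mathcal{B},\mathcal{B})$. Your write-up is a bit more explicit about linearity and about $|F_n|<\infty$, but the argument is the same.
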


\begin{proof}
Evidently $A_{n}$ is a linear operator from $\mathcal{B}$ to $\mathcal{B}$.
From Proposition \ref{prop:integrability} we know that $1_{F_{n}}\pi(g^{-1})x$
is Bochner integrable, and in particular

\[
\Vert A_{n}x\Vert:=\left\Vert \fint_{F_{n}}\pi(g^{-1})xdg\right\Vert \leq\fint_{F_{n}}\Vert\pi(g^{-1})x\Vert dg\leq\fint_{F_{n}}\Vert x\Vert dg=\Vert x\Vert.\qedhere
\]
\end{proof}
A key piece of quantitative information for us will be how large $N$
has to be if $g$ is chosen to be an element of $(F_{n})$, in order
for $|F_{N}\Delta gF_{N}|/|F_{N}|$ to be small. This information
is encoded by the following modulus:
\begin{defn}
\label{def:convergence modulus}Let $G$ be an amenable group, either
countable discrete or lcsc, with Følner sequence $(F_{n})$. A \emph{Følner
convergence modulus} $\beta(n,\varepsilon)$ for $(F_{n})$ returns
an integer $N$ such that:

\begin{enumerate}
\item If $G$ is countable discrete, 
\[
(\forall m\geq N)(\forall g\in F_{n})\left[|F_{m}\Delta gF_{m}|<|F_{m}|\varepsilon\right].
\]
\item If $G$ is lcsc, 
\[
(\forall m\geq N)(\exists F_{n}^{\prime}\subset F_{n})(\forall g\in F_{n}^{\prime})\left[|F_{n}\backslash F_{n}^{\prime}|<|F_{n}|\varepsilon\wedge|F_{m}\Delta gF_{m}|<|F_{m}|\varepsilon\right].
\]
\end{enumerate}
\end{defn}

We remark that if $(F_{n})$ is an \emph{increasing} Følner sequence
(that is, $F_{n}\subset F_{m}$ for all $n\leq n$) then it follows
trivially that $\beta(n,\varepsilon)$ is a nondecreasing function
for any fixed $\varepsilon$. However, in what follows we do not always
assume that $(F_{n})$ is increasing. In some instances it is technically
convenient to assume that $\beta(n,\varepsilon)$ is nondecreasing;
in this case, we can upper bound $\beta(n,\varepsilon)$ using an
``envelope'' of the form $\tilde{\beta}(n,\varepsilon)=\max_{1\leq i\leq n}\beta(i,\varepsilon)$.
Hence, in any case we are free to assume that $\beta(n,\varepsilon)$
is nondecreasing in $n$ if necessary. 

It should be clear that if we have a ``sufficiently explicit'' amenable
group $G$ with a ``sufficiently explicit'' Følner sequence $(F_{n})$,
then we can explicitly write down a Følner convergence modulus $\beta(n,\varepsilon)$
for $(F_{n})$. What is meant by this? Simply, the following: if we
know ``explicitly'' that $(F_{n})$ is a Følner sequence, this amounts
to knowing ``explicitly'', for each $g\in G$, that $|F_{n}\Delta gF_{n}|/|F_{n}|\rightarrow0$
(resp. the analogous statement for when $G$ is lcsc), which in turn
amounts to being able to write down how exactly this convergence occurs
in an explicit way, i.e. that one can explicitly write down a rate
of convergence $N(g,\varepsilon)$ for $|F_{n}\Delta gF_{n}|/|F_{n}|$.
But a Følner convergence modulus $\beta(n,\varepsilon)$ is just a
function which dominates $N(g,\varepsilon)$ for every $g\in F_{n}$
--- in particular we can take $\beta(n,\varepsilon)=\max_{g\in F_{n}}N(g,\varepsilon)$. 

Likewise (but less heuristically), it is easy to see that we can select
a Følner sequence in such a way that $\beta(n,\varepsilon)$ can be
chosen to be a computable function (for an appropriate restriction
on the domain of the second variable). The following argument has
essentially already been observed by previous authors \cite{cavaleri2017computability,cavaleri2018folner,moriakov2018effective}
working with slightly different objects, but we include it for completeness.
(In the interest of simplicity, we restrict our attention to countable
discrete groups; one can say something similar for lcsc groups with
computable topology, but we do not pursue this here.)
\begin{prop}
Let $G$ be a countable discrete finitely generated amenable group
with the solvable word property. Fix $k\in\mathbb{N}$. Then $G$
has a Følner sequence $(F_{n})$ such that $\beta(n,k^{-1})=\max\{n+1,3k\}$
is a Følner convergence modulus for $(F_{n})$. Moreover $(F_{n})$
can be chosen in a computable fashion.
\end{prop}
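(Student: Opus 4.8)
The plan is to construct $(F_n)$ by a recursion in which each term is located by an exhaustive search, combining two standard facts: solvability of the word problem yields an effective presentation of $G$, while amenability, in the guise of the F\o{}lner criterion, is exactly what guarantees that each search terminates.

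First I would fix a finite generating set and note that, since the word problem for $G$ is solvable, one can effectively enumerate the elements of $G$ (as shortlex-least representative words, say), decide equality of elements, and compute products; in particular the balls $B_n$ of radius $n$ about $e$ form a computable increasing sequence of finite sets with $\bigcup_n B_n=G$. Next, recall the F\o{}lner criterion for amenability: since $G$ is amenable, for every finite $K\subseteq G$ and every rational $\delta>0$ there is a finite nonempty $F\subseteq G$ with $|F\,\Delta\,kF|<\delta|F|$ for all $k\in K$. For \emph{concretely given} finite $K$, $F$ and rational $\delta$, the predicate ``$F$ is $\delta$-F\o{}lner for $K$'' is decidable, since it only involves forming the sets $kF$, computing symmetric differences, and comparing integers --- all reducible to group multiplication and equality testing. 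Hence the function $\mathrm{Fol}$ which, on a finitely coded pair $(K,\delta)$, searches through the finite subsets of $G$ in the order induced by the enumeration of $G$ and outputs the first one that is $\delta$-F\o{}lner for $K$, is computable; and it is \emph{total} on valid inputs precisely because the F\o{}lner criterion ensures the search always succeeds.

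I would then set $F_0:=\{e\}$ and, for $n\ge 1$,
\[
F_n:=\mathrm{Fol}\Bigl(B_n\cup\bigcup_{j<n}F_j,\ \tfrac{1}{n+1}\Bigr),
\]
a legitimate recursion, so that $n\mapsto F_n$ is computable. Checking that $(F_n)$ is a F\o{}lner sequence is immediate: given finite $K$ and $\varepsilon>0$, choose $n_0$ with $K\subseteq B_{n_0}$ and $\tfrac{1}{n_0+1}<\varepsilon$; then for $n\ge n_0$ one has $K\subseteq B_n$ and $F_n$ is $\tfrac{1}{n+1}$-F\o{}lner for $B_n$, hence $|F_n\,\Delta\,kF_n|<\varepsilon|F_n|$ for all $k\in K$. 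And $\beta(n,k^{-1}):=\max\{n+1,3k\}$ is a F\o{}lner convergence modulus because for $m\ge\max\{n+1,3k\}$ one has $n<m$, so $F_n\subseteq B_m\cup\bigcup_{j<m}F_j$ and thus $|F_m\,\Delta\,gF_m|<\tfrac{1}{m+1}|F_m|$ for every $g\in F_n$; since also $m\ge 3k$ gives $\tfrac{1}{m+1}<\tfrac{1}{k}$, this yields $|F_m\,\Delta\,gF_m|<\tfrac{1}{k}|F_m|$ for all $g\in F_n$, as wanted.

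I do not expect a real obstacle here: the argument is essentially bookkeeping, and the hypotheses enter only at the two points flagged above --- solvability of the word problem, which makes the $\delta$-F\o{}lner test on a concrete finite set algorithmically decidable and the enumeration of finite subsets of $G$ effective, and amenability, which makes the search defining $\mathrm{Fol}$ halt. One could remark in passing that the factor $3$ is inessential (the same construction makes even $\max\{n+1,k\}$ a convergence modulus) and that the sequence $(F_n)$ built above does not depend on $k$ at all; but we record only the weaker assertion claimed.
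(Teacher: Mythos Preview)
Your proof is correct and follows essentially the same recursive-search strategy as the paper: build each $F_n$ by an effective search (guaranteed to halt by amenability, and decidable step-by-step thanks to the solvable word problem) for a finite set that is sufficiently invariant under all earlier $F_j$. The only cosmetic difference is that the paper first finds $\tilde F_n\supseteq F_{n-1}$ that is $(1/n)$-F\o{}lner for $F_{n-1}$ and then adjoins the $n$th enumerated element $g_n$, which degrades the invariance estimate from $1/n$ to $3/n$ and is precisely the source of the factor $3$; your variant, which uses the balls $B_n$ in place of the enumeration $\{g_n\}$ and makes $F_n$ directly $1/(n+1)$-F\o{}lner for $B_n\cup\bigcup_{j<n}F_j$, avoids this degradation --- so your closing remark that the $3$ is inessential is exactly right.
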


\begin{proof}
Fix a computable enumeration of the elements of $G$, as well as a
computable enumeration of the finite subsets of $G$. The solvable
word property ensures that we can do this, and also that the cardinality
of $F\Delta gF$ can always be computed for any $g\in G$ and finite
set $F$. So, take $F_{1}$ to be an arbitrary finite subset of $G$
containing $g_{1}$, the first element of $G$. Given $F_{n-1}$,
take $\tilde{F}_{n}$ to be the least (with respect to the enumeration)
finite subset of $G$ containing $F_{n-1}$, such that for all $g\in F_{n-1}$,
$|\tilde{F}_{n}\Delta g\tilde{F}_{n}|<|\tilde{F}_{n}|/n$. Such an
$\tilde{F}_{n}$ exists since $G$ is amenable. Then, put $F_{n}=\tilde{F}_{n}\cup\{g_{n}\}$
where $g_{n}$ is the $n$th element of $G$. This is indeed a Følner
sequence: for a fixed $g$, we see that $|F_{n}\Delta gF_{n}|/|F_{n}|<3/n$
for all $n$ greater than the first $N$ such that $g\in F_{N}$,
because
\[
\frac{|F_{n}\Delta gF_{n}|}{|F_{n}|}\leq\frac{2+|\tilde{F}_{n}\Delta g\tilde{F}_{n}|}{|F_{n}|}\leq\frac{2}{|F_{n}|}+\frac{|\tilde{F}_{n}\Delta g\tilde{F}_{n}|}{|\tilde{F}_{n}|}<\frac{3}{n}.
\]
 Hence $|F_{n}\Delta gF_{n}|/|F_{n}|\rightarrow0$. Moreover, we see
that if $m\geq\max\{n+1,3k\}$, then 
\[
(\forall g\in F_{n})\qquad|F_{m}\Delta gF_{m}|<3|F_{m}|/m\leq|F_{m}|/k
\]
and so $\beta(n,k^{-1}):=\max\{n+1,3k\}$ is indeed a Følner convergence
modulus for $(F_{n})$.
\end{proof}
\begin{rem*}
The previous proposition is not sharp. It has been shown that there
are groups \emph{without} the solvable word property which nonetheless
have computable Følner sequences with computable convergence behaviour
\cite{cavaleri2018folner}. (The cited paper uses a different explicit
modulus of convergence for Følner sequences than the present paper,
although the argument carries over to our setting without modification.) 
\end{rem*}
Finally, it will be convenient to introduce the notion of a Følner
sequence where the gap between $n$ and $\beta(n,\varepsilon)$ is
controlled. 
\begin{defn}
\label{def:fast}Let $G$ be a countable discrete or lcsc amenable
group and $(F_{n})$ a Følner sequence. Let $\lambda\in\mathbb{N}$
and $\varepsilon>0$. We say that $(F_{n})$ is a \emph{$(\lambda,\varepsilon)$-fast}
Følner sequence if

\begin{enumerate}
\item For $G$ countable and discrete, it holds that for all $n\in\mathbb{N}$
that for all $m\geq n+\lambda$, for all $g\in F_{n}$, $|F_{m}\Delta gF_{m}|/|F_{m}|<\varepsilon$.
\item For $G$ lcsc, it holds that for all $n\in\mathbb{N}$ that for all
$m\geq n+\lambda$, there exists a set $F_{n}^{\prime}\subset F_{n}$
such that $|F_{n}\backslash F_{n}^{\prime}|<|F_{n}|\varepsilon$,
so that for all $g\in F_{n}^{\prime}$, $|F_{m}\Delta gF_{m}|/|F_{m}|<\varepsilon$.
\end{enumerate}
In other words, a Følner sequence $(F_{n})$ is $(\lambda,\varepsilon)$-fast
provided that $\beta(n,\varepsilon)=n+\lambda$ is a Følner convergence
modulus for $(F_{n})$.
\end{defn}

It is clear that any Følner sequence can be refined into a $(\lambda,\varepsilon)$-fast
Følner sequence. 
\begin{prop}
Given $\lambda\in\mathbb{N}$ and $\varepsilon>0$, any Følner sequence
can be refined into a $(\lambda,\varepsilon)$-fast Følner sequence. 
\end{prop}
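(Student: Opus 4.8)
The plan is to obtain the desired refinement as a suitably chosen subsequence of $(F_n)$, using a F{\o}lner convergence modulus to guide the choice. First I would fix a F{\o}lner convergence modulus $\beta(n,\varepsilon)$ for $(F_n)$ in the sense of Definition \ref{def:convergence modulus}; such a modulus exists precisely because $(F_n)$ is a F{\o}lner sequence. Since any integer exceeding a valid output of $\beta(\cdot,\varepsilon)$ is again a valid output, I may assume without loss of generality that $\beta(n,\varepsilon) > n$ for every $n$.

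Next, define indices recursively by $n_1 := 1$ and $n_{k+1} := \beta(n_k,\varepsilon)$. By the normalization just made, $(n_k)$ is strictly increasing, so $G_k := F_{n_k}$ defines a genuine subsequence of $(F_n)$. Any subsequence of a F{\o}lner sequence is again a F{\o}lner sequence: for any $\varepsilon' > 0$ and any compact (resp.\ finite) $K \subset G$, the defining inequality of Definition \ref{def:amenable} holds for all $F_n$ with $n$ past some threshold, hence for all $G_k$ with $k$ sufficiently large, since $n_k \to \infty$. Thus $(G_k)$ is a F{\o}lner sequence, and it remains only to verify that it is $(\lambda,\varepsilon)$-fast.

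To that end, fix $k \in \mathbb{N}$ and $m \geq k + \lambda$. Since $\lambda \geq 1$ we have $m \geq k+1$, and therefore $n_m \geq n_{k+1} = \beta(n_k,\varepsilon)$. Now apply the defining property of the F{\o}lner convergence modulus at index $n_k$ to the value $n_m \geq \beta(n_k,\varepsilon)$. In the countable discrete case this gives $|F_{n_m} \Delta g F_{n_m}| < |F_{n_m}|\,\varepsilon$ for every $g \in F_{n_k}$, that is, $|G_m \Delta g G_m|/|G_m| < \varepsilon$ for every $g \in G_k$. In the lcsc case it produces a subset $F_{n_k}' \subset F_{n_k}$ --- which is allowed to depend on $m$ --- with $|F_{n_k} \setminus F_{n_k}'| < |F_{n_k}|\,\varepsilon$ and $|F_{n_m} \Delta g F_{n_m}| < |F_{n_m}|\,\varepsilon$ for every $g \in F_{n_k}'$. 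In either case this is exactly the condition of Definition \ref{def:fast} (taking $G_k' := F_{n_k}'$ in the lcsc case), so $(G_k)$ is $(\lambda,\varepsilon)$-fast.

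No real obstacle arises here; the argument is a routine subsequence extraction. The only points requiring a little care are arranging that the recursion yields a strictly increasing index sequence (handled by the normalization $\beta(n,\varepsilon) > n$) and correctly matching the quantifier structure of the lcsc clause of Definition \ref{def:fast}, where the exceptional subset $F_{n_k}'$ is permitted to vary with $m$.
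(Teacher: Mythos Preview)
Your proof is correct and follows essentially the same recursive subsequence-selection strategy as the paper. The only cosmetic difference is that the paper works directly from the F\o lner property and, at step $j+1$, demands the almost-invariance condition for all $g$ in the union $\bigcup_{i\le j} F_{n_i}$, whereas you invoke the F\o lner convergence modulus $\beta$ and exploit its built-in ``for all $m\ge N$'' clause to avoid taking that union; both devices serve the same purpose of ensuring the condition propagates to \emph{all} later terms $G_m$, not just the immediate successor.
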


\begin{proof}
It suffices to produce a $(1,\varepsilon)$-fast refinement. For simplicity,
we only state the argument for the case where $G$ is countable and
discrete. 

Suppose we have already selected the first $j$ Følner sets in our
refinement $F_{n_{1}},\ldots,F_{n_{j}}$. Then, take $F_{n_{j+1}}$
to be the next element of the sequence $(F_{n})$ after $n_{j}$ such
that, for all $g\in\bigcup_{i=1}^{j}F_{n_{i}}$, $|F_{n_{j+1}}\Delta gF_{n_{j+1}}|/|F_{n_{j+1}}|<\varepsilon$.
Such a term exists since $(F_{n})$ is a Følner sequence. 
\end{proof}
Less obvious is the relationship between a Følner sequence being fast
and the property of being \emph{tempered} which is used in Lindenstrauss's
pointwise ergodic theorem \cite{lindenstrauss2001pointwise}, although
they are somewhat similar in spirit. Nonetheless, we quickly observe
that the previous proposition indicates that any tempered Følner sequence
can be refined into a Følner sequence which is both tempered and $(\lambda,\varepsilon)$-fast,
simply by the fact that any subsequence of a tempered Følner sequence
is again tempered. 

\section{The Main Theorem}

Frequently in ergodic theory, one argues that if $K\gg N$, then $A_{K}A_{N}x\approx A_{K}x$.
The following lemma makes this precise in terms of the modulus $\beta$. 
\begin{lem}
\label{lem:Laverage-of-averages}Let $(\mathcal{B},\Vert\cdot\Vert)$
be a normed vector space. Let $G$ be a lcsc amenable group with Følner
sequence $(F_{n})$, and let $G$ act Borel strongly on $\mathcal{B}$
via the representation $\pi:G\rightarrow\mathcal{L}_{1}(\mathcal{B},\mathcal{B})$.
Fix $N\in\mathbb{N}$ and $\eta>0$. Let $\beta$ be the Følner convergence
modulus and suppose $K\geq\beta(N,\eta)$. Then for any $x\in\mathcal{B}$,
$\Vert A_{K}x-A_{K}A_{N}x\Vert<3\eta\Vert x\Vert$. (If $G$ is countable
discrete, the ``strongly Borel'' part is trivially satisfied, and
we have the sharper estimate $\Vert A_{K}x-A_{K}A_{N}x\Vert<\eta\Vert x\Vert$.)
\end{lem}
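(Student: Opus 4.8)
The plan is to expand both $A_{K}x$ and $A_{K}A_{N}x$ as iterated Bochner integrals over $F_{N}\times F_{K}$, reduce their difference to an outer average over $g\in F_{N}$ of a discrepancy between $A_{K}$ ``seen on $F_{K}$'' and ``seen on $gF_{K}$'', bound that discrepancy pointwise by $\frac{|F_{K}\Delta gF_{K}|}{|F_{K}|}\Vert x\Vert$, and then feed in the F\"olner convergence modulus.

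First I would unfold the two averages. By the definition of $A_{K}$ and $A_{N}$, and using that $\pi(h^{-1})$ is a bounded linear operator (so it commutes with the Bochner integral) together with the group law $\pi(h^{-1})\pi(g^{-1})=\pi((gh)^{-1})$,
\[
A_{K}A_{N}x=\fint_{F_{K}}\pi(h^{-1})\Bigl(\fint_{F_{N}}\pi(g^{-1})x\,dg\Bigr)dh=\fint_{F_{K}}\fint_{F_{N}}\pi((gh)^{-1})x\,dg\,dh=\fint_{F_{N}}\fint_{F_{K}}\pi((gh)^{-1})x\,dh\,dg,
\]
where the integrability needed to make these manipulations (and the final appeal to Fubini) legitimate is exactly Proposition \ref{prop:integrability}(1) and (2), and where ``Borel strongly'' is used precisely to secure that joint measurability. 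Since $A_{K}x=\fint_{F_{K}}\pi(h^{-1})x\,dh$ does not involve $g$, we may average it over $F_{N}$ for free: $A_{K}x=\fint_{F_{N}}\bigl(\fint_{F_{K}}\pi(h^{-1})x\,dh\bigr)dg$. Subtracting and using $\Vert\int f\Vert\le\int\Vert f\Vert$,
\[
\Vert A_{K}x-A_{K}A_{N}x\Vert\leq\fint_{F_{N}}\Bigl\Vert\fint_{F_{K}}\pi(h^{-1})x\,dh-\fint_{F_{K}}\pi((gh)^{-1})x\,dh\Bigr\Vert\,dg.
\]

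Next I would rewrite the inner difference for a fixed $g$. Left-invariance of the left Haar measure gives $\int_{F_{K}}\pi((gh)^{-1})x\,dh=\int_{gF_{K}}\pi(h^{-1})x\,dh$ and $|gF_{K}|=|F_{K}|$; note that because we translate \emph{on the left}, no modular function appears even though $G$ need not be unimodular. Hence the inner difference is $\fint_{F_{K}}\pi(h^{-1})x\,dh-\fint_{gF_{K}}\pi(h^{-1})x\,dh$, and splitting $F_{K}$ and $gF_{K}$ against $F_{K}\Delta gF_{K}$ together with $\Vert\pi(h^{-1})x\Vert\le\Vert x\Vert$ (from $\pi(g)\in\mathcal{L}_{1}$) yields both
\[
\Bigl\Vert\fint_{F_{K}}\pi(h^{-1})x\,dh-\fint_{gF_{K}}\pi(h^{-1})x\,dh\Bigr\Vert\leq\frac{|F_{K}\Delta gF_{K}|}{|F_{K}|}\Vert x\Vert
\]
and the crude bound $\le 2\Vert x\Vert$ valid for every $g$.

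Finally I would invoke the F\"olner convergence modulus. Since $K\geq\beta(N,\eta)$, Definition \ref{def:convergence modulus}(2) produces a subset $F_{N}^{\prime}\subset F_{N}$ with $|F_{N}\setminus F_{N}^{\prime}|<|F_{N}|\eta$ on which $|F_{K}\Delta gF_{K}|<|F_{K}|\eta$. Splitting the outer average $\fint_{F_{N}}$ into its part over $F_{N}^{\prime}$ (where the integrand is $<\eta\Vert x\Vert$) and its part over $F_{N}\setminus F_{N}^{\prime}$ (where we use the bound $2\Vert x\Vert$ on a set of relative measure $<\eta$) gives $\Vert A_{K}x-A_{K}A_{N}x\Vert<\eta\Vert x\Vert+2\eta\Vert x\Vert=3\eta\Vert x\Vert$. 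In the countable discrete case the modulus controls $|F_{K}\Delta gF_{K}|$ uniformly over all of $F_{N}$, so the wasteful second term is absent and the bound improves to $\eta\Vert x\Vert$. The computation is essentially routine; the only points that require care are checking that each Bochner-integral manipulation is licensed by Proposition \ref{prop:integrability}, the left-translation change of variables that avoids the modular function, and the split against the exceptional set $F_{N}^{\prime}$ --- the last being exactly what forces the constant $3$ in the lcsc statement.
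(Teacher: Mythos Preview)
Your proof is correct and follows essentially the same route as the paper's: unfold the iterated Bochner integrals, apply Fubini (justified via Proposition~\ref{prop:integrability}), use left-invariance to rewrite the inner integral as an average over $gF_{K}$, bound by $\frac{|F_{K}\Delta gF_{K}|}{|F_{K}|}\Vert x\Vert$, and split the outer average against the exceptional set $F_{N}\setminus F_{N}^{\prime}$. Apart from the swapped roles of the dummy variables $g$ and $h$, and your explicit remark about why no modular function appears (a point the paper leaves implicit), the arguments are identical.
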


\begin{proof}
From the definition of Følner convergence modulus, we know that there
exists an $F_{N}^{\prime}\subset F_{N}$ such that $|F_{N}\backslash F_{N}^{\prime}|<|F_{N}|\eta$
and such that for all $h\in F_{N}^{\prime}$, $|F_{K}\Delta hF_{K}|<|F_{K}|\eta$.
Now perform the following computation (justification for each step
addressed below): 
\begin{align*}
\Vert A_{K}x-A_{K}A_{N}x\Vert & :=\left\Vert \fint_{F_{K}}\pi(g^{-1})xdg-\fint_{F_{K}}\pi(g^{-1})\left(\fint_{F_{N}}\pi(h^{-1})xdh\right)dg\right\Vert \\
 & \;=\left\Vert \fint_{F_{K}}\pi(g^{-1})xdg-\fint_{F_{K}}\left(\fint_{F_{N}}\pi(g^{-1})(\pi(h^{-1})x)dh\right)dg\right\Vert \\
 & \;=\left\Vert \fint_{F_{K}}\pi(g^{-1})xdg-\fint_{F_{K}}\left(\fint_{F_{N}}\pi((hg)^{-1})xdh\right)dg\right\Vert \\
 & \;=\left\Vert \fint_{F_{N}}\left(\fint_{F_{K}}\pi(g^{-1})xdg\right)dh-\fint_{F_{N}}\left(\fint_{F_{K}}\pi((hg)^{-1})xdg\right)dh\right\Vert 
\end{align*}
\begin{align*}
 & \leq\fint_{F_{N}}\left\Vert \fint_{F_{K}}\pi(g^{-1})xdg-\fint_{F_{K}}\pi((hg)^{-1})xdg\right\Vert dh\\
 & =\fint_{F_{N}}\left\Vert \fint_{F_{K}}\pi(g^{-1})xdg-\fint_{hF_{K}}\pi(g^{-1})xdg\right\Vert dh\\
 & \leq\fint_{F_{N}}\left(\frac{1}{|F_{K}|}\int_{F_{K}\Delta hF_{K}}\Vert\pi(g^{-1})x\Vert dg\right)dh\\
 & \leq\fint_{F_{N}}\left(\frac{1}{|F_{K}|}\int_{F_{K}\Delta hF_{K}}\Vert x\Vert dg\right)dh\\
 & =\fint_{F_{N}}\frac{1}{|F_{K}|}\left(|F_{K}\Delta hF_{K}|\Vert x\Vert\right)dh\\
 & <\frac{1}{|F_{N}|}\left[\int_{F_{N}^{\prime}}\eta\Vert x\Vert dh+\int_{F_{N}\backslash F_{N}^{\prime}}\left(\frac{1}{|F_{K}|}|F_{K}\Delta hF_{K}|\Vert x\Vert\right)dh\right]\\
 & \leq\eta\Vert x\Vert+\frac{1}{|F_{N}|}\int_{F_{N}\backslash F_{N}^{\prime}}\left(2\Vert x\Vert\right)dh\leq3\eta\Vert x\Vert.
\end{align*}
If $G$ is countable discrete, we instead assume that for all $h\in F_{N}$
(rather than $F_{N}^{\prime}$), $|F_{K}\Delta hF_{K}|<|F_{K}|\eta$.
Therefore, the penultimate line reduces to $\frac{1}{|F_{N}|}\int_{F_{N}}\eta\Vert x\Vert dh$,
and the last line reduces to $\eta\Vert x\Vert$. 

Finally let's discuss which properties of the Bochner integral we
had to use. Thanks to Proposition \ref{prop:integrability}, we have
that $g\mapsto1_{F_{K}}\pi(g^{-1})x$ is Bochner integrable. Given
that, for each $g$, $\pi(g)$ is a bounded linear operator, then
indeed it follows that $\pi(g^{-1})\left(\int\pi(h^{-1})xdh\right)=\int(\pi(g^{-1})\pi(h^{-1})xdh$.
From Fubini's theorem and the fact (also from Proposition \ref{prop:integrability})
that $(g,h)\mapsto1_{F_{N}\times F_{K}}\pi((hg)^{-1})x$ is Bochner
integrable, we see that $\int_{F_{K}}\int_{F_{N}}\pi((hg)^{-1})xdhdg=\int_{F_{N}}\int_{F_{K}}\pi((hg)^{-1})xdgdh$.
Lastly, we repeatedly invoked the fact that $\Vert\int_{A}f(g)dg\Vert\leq\int_{A}\Vert f(g)\Vert dg$.
It's worth noting that in the case where $G$ is countable discrete,
only the first fact (that $G$ acts by bounded linear operators with
unit norm) is needed as an assumption, as the latter two properties
hold trivially for finite averages.
\end{proof}
\begin{rem*}
It is possible to generalise this argument to the case where the action
of $G$ is ``power bounded'' in the sense that there is some uniform
constant $C$ such that for ($dg$-almost) all $g\in G$, $\Vert\pi(g)\Vert\leq C$.
However the argument for our main theorem necessitates setting $C=1$.
\end{rem*}
The following argument is a generalisation of a proof of Garrett Birkhoff
\cite{birkhoff1939} to the amenable setting. The statement of the
theorem is weaker than results which are already in Greenleaf's article
\cite{greenleaf1973ergodic}, but we include the argument for several
reasons. One is that it is very short; another is that we will ultimately
derive a bound on $\varepsilon$-fluctuations via a modification of
this proof; and finally, the proof indicates additional information
about the limiting behaviour of the norm of $A_{n}x$, namely that
$\lim_{n}\Vert A_{n}x\Vert=\inf_{n}\Vert A_{n}x\Vert$.
\begin{thm}
Let $G$ be a lcsc amenable group with compact Følner sequence $(F_{n})$,
and let $\mathcal{B}$ be a uniformly convex Banach space such that
$G$ acts Borel strongly on $\mathcal{B}$ via the representation
$\pi:G\rightarrow\mathcal{L}_{1}(\mathcal{B},\mathcal{B})$. Then
for every $x\in\mathcal{B}$, the sequence of averages $(A_{n}x)$
converges in norm $\Vert\cdot\Vert_{\mathcal{B}}$.
\end{thm}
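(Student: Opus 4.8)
The plan is to mimic Birkhoff's classical argument for the mean ergodic theorem on uniformly convex spaces, transposed to the amenable setting, with Lemma \ref{lem:Laverage-of-averages} playing the role that the analogous "averages of averages" estimate plays in the $\mathbb{Z}$-case. Fix $x \in \mathcal{B}$. Since every $A_n$ is a norm-one operator, the sequence $\Vert A_n x\Vert$ is bounded below by $0$, so $d := \inf_n \Vert A_n x\Vert$ exists; the first step is to argue that in fact $\lim_n \Vert A_n x\Vert = d$ and that $(A_n x)$ is Cauchy. The strategy is to show: for every $\varepsilon > 0$ there is an $N$ such that for all sufficiently large $m, n$ (large relative to $N$ via the F\o lner convergence modulus $\beta$), $\Vert A_m x - A_n x\Vert < \varepsilon$.

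First I would pick $N$ with $\Vert A_N x\Vert < d + \delta$ for a small $\delta$ to be chosen later. Now take any $K$ large enough that $K \geq \beta(N,\eta)$ for a suitably small $\eta$; by Lemma \ref{lem:Laverage-of-averages}, $\Vert A_K x - A_K A_N x\Vert < 3\eta\Vert x\Vert$. Since $A_K$ is norm-one, $\Vert A_K A_N x\Vert \leq \Vert A_N x\Vert < d + \delta$, so $\Vert A_K x\Vert < d + \delta + 3\eta\Vert x\Vert$; choosing $\eta$ small this shows $\Vert A_K x\Vert \to d$, i.e. $\lim_n \Vert A_n x\Vert = \inf_n \Vert A_n x\Vert$. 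Next, for two large indices $K, K'$ (both exceeding $\beta(N,\eta)$), set $y = A_N x$. Both $A_K x$ and $A_{K'} x$ are within $3\eta\Vert x\Vert$ of $A_K y$ and $A_{K'} y$ respectively, and both $A_K y, A_{K'} y$ have norm at most $\Vert y\Vert \leq d + \delta$; moreover their norms are at least (approximately) $d$. The key point is to bound $\Vert A_K x - A_{K'} x\Vert$, and for this I would apply $A_K$ and $A_{K'}$ to $y$ and use uniform convexity on the pair $A_K y$, $A_{K'} y$: if these were far apart, then $\Vert \frac12(A_K y + A_{K'} y)\Vert \leq \Vert y\Vert - u(\varepsilon')$ for the relevant $\varepsilon'$. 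But $\frac12(A_K y + A_{K'} y)$ is itself an average of the form that, after composing with an appropriate further average $A_L$ with $L$ huge, is close to $A_L x$, hence has norm at least $\approx d$. Combining, $d \lesssim \Vert y\Vert - u(\varepsilon') \leq d + \delta - u(\varepsilon')$, forcing $u(\varepsilon')$ small, hence $\varepsilon'$ small, hence $A_K y$ and $A_{K'} y$ close, hence (via the $3\eta\Vert x\Vert$ estimates) $A_K x$ and $A_{K'} x$ close.

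The one subtlety, and the step I expect to be the main obstacle, is the claim that $\frac12(A_K y + A_{K'} y)$ has norm bounded below by something close to $d$. In the single-transformation case this is immediate because $\frac12(A_K y + A_{K'} y)$ is again a convex average of iterates and Birkhoff's inequality $\Vert A_L(\text{convex combo})\Vert \geq$ (roughly) $d$ applies after pushing forward by a large average. In the amenable case the analogous move is: apply $A_L$ for $L \geq \beta(\max(K,K'),\eta)$ to the vector $\frac12(A_K y + A_{K'} y) = \frac12 A_K y + \frac12 A_{K'} y$; by linearity and Lemma \ref{lem:Laverage-of-averages} applied to each of $A_K y = A_K(A_N x)$ and $A_{K'} y$, we get $\Vert A_L(\tfrac12 A_K y + \tfrac12 A_{K'} y) - A_L x \Vert$ small, so the left side has norm $\geq \Vert A_L x\Vert - (\text{small}) \geq d - (\text{small})$; and since $A_L$ is norm-one, $\Vert \tfrac12(A_K y + A_{K'} y)\Vert \geq \Vert A_L(\tfrac12(A_K y + A_{K'} y))\Vert \geq d - (\text{small})$. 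Here one must be slightly careful that Lemma \ref{lem:Laverage-of-averages} is stated for $A_K x - A_K A_N x$, so to handle $A_L(A_K y)$ versus $A_L x$ one applies it twice (first $A_L A_K (A_N x)$ vs $A_L (A_N x)$ since $L \geq \beta(K, \cdot)$, then $A_L(A_N x)$ vs $A_L x$ since $L \geq \beta(N,\cdot)$), absorbing all the error terms into a single $\eta$ chosen small enough at the outset. Once these estimates are in place, unwinding the quantifiers—choose $\varepsilon$, then $\delta$ and the uniform convexity modulus dictate a target $\varepsilon'$, then $\eta$ small enough, then $N$, then all $K, K', L$ beyond the appropriate $\beta$-values—yields that $(A_n x)$ is Cauchy, and completeness of $\mathcal{B}$ finishes the proof.
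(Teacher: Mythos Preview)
Your proposal is correct and follows essentially the same Birkhoff-style argument as the paper: define $d=\inf_n\Vert A_n x\Vert$, use Lemma~\ref{lem:Laverage-of-averages} to show $\Vert A_K x\Vert\to d$, and then combine uniform convexity with a further averaging $A_L$ (the paper's $A_K$) to force any persistent $\varepsilon$-gap to push $\Vert A_L x\Vert$ below $d$, a contradiction. The only cosmetic difference is that you apply uniform convexity to $A_K y,A_{K'}y$ with $y=A_N x$ (incurring one extra invocation of the lemma), whereas the paper applies it directly to $A_N x$ and $A_M x$; both routes work.
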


\begin{proof}
Without loss of generality, we assume $\Vert x\Vert\leq1$ (otherwise,
simply replace $x$ with $x/\Vert x\Vert$). Define $L:=\inf_{n}\Vert A_{n}x\Vert$.
Fix an arbitrary $\varepsilon_{0}>0$, and let $N$ be some index
such that $\Vert A_{N}x\Vert<L+\varepsilon_{0}$. Let $u$ denote
the modulus of uniform convexity. Fix a Følner convergence modulus
$\beta(n,\varepsilon)$ for $(F_{n})$, and let $\eta>0$ be arbitrary.
Suppose $M\geq\beta(N,\eta/(3\Vert x\Vert))$ is an index such that
$\Vert A_{N}x-A_{M}x\Vert>\delta$. (If no such $\delta$ exists then
this means that after $\beta(N,\eta/(3\Vert x\Vert))$, the sequence
has converged to within $\delta$.) Then, from uniform convexity,
we know that
\[
\left\Vert \frac{1}{2}(A_{N}x+A_{M}x)\right\Vert \leq\max\{\Vert A_{N}x\Vert,\Vert A_{M}x\Vert\}-u(\delta).
\]
Additionally, it follows from Lemma \ref{lem:Laverage-of-averages}
that $\Vert A_{M}x-A_{M}A_{N}x\Vert<\eta$, and thus 
\[
\left\Vert \frac{1}{2}(A_{N}x+A_{M}x)\right\Vert <\max\{\Vert A_{N}x\Vert,\Vert A_{M}A_{N}x\Vert+\eta\}-u(\delta).
\]
But $\Vert A_{M}A_{N}x\Vert\leq\Vert A_{N}x\Vert$, so this implies
\[
\left\Vert \frac{1}{2}(A_{N}x+A_{M}x)\right\Vert <\Vert A_{N}x\Vert+\eta-u(\delta).
\]
In turn, we know that $\Vert A_{N}x\Vert<L+\varepsilon_{0}$, so 
\[
\left\Vert \frac{1}{2}(A_{N}x+A_{M}x)\right\Vert <L+\varepsilon_{0}+\eta-u(\delta).
\]
In fact, it follows that $\Vert\frac{1}{2}A_{K}(A_{N}x+A_{M}x)\Vert<L+\varepsilon_{0}+\eta-u(\delta)$
also, for any index $K$, since $\Vert A_{K}\Vert\leq1$. Now, choosing
$K\geq\max\{\beta(N,\eta/(3\Vert x\Vert)),\beta(M,\eta/(3\Vert x\Vert))\}$,
we have that both $\Vert A_{K}x-A_{K}A_{N}x\Vert<\eta$ and $\Vert A_{K}x-A_{K}A_{M}x\Vert<\eta$.
Thus, 
\begin{align*}
\Vert A_{K}x\Vert & =\left\Vert \frac{1}{2}(A_{K}x-A_{K}A_{N}x)+\frac{1}{2}(A_{K}x-A_{K}A_{M}x)+\frac{1}{2}(A_{K}A_{N}+A_{K}A_{M})\right\Vert \\
 & \leq\eta+\left\Vert \frac{1}{2}A_{K}(A_{N}x+A_{M}x)\right\Vert \\
 & <2\eta+L+\varepsilon_{0}-u(\delta).
\end{align*}
Since $\eta$ can be chosen to be arbitrarily small (this merely implies
that $K$ and $M$ are very large), we see that $\limsup_{K}\Vert A_{K}x\Vert\leq L+\varepsilon_{0}-u(\delta)$.
But since our choice of $\varepsilon_{0}$ was arbitrary, it follows
that in fact $\limsup_{K}\Vert A_{K}x\Vert\leq L$. 

Moreover this implies that $(A_{n}x)$ converges in norm. For if this
were not the case, then we could find some $\delta_{0}>0$ such that
$\Vert A_{n}x-A_{m}x\Vert>\delta_{0}$ infinitely often; in fact,
there must be some $\delta_{0}>0$ such that $\Vert A_{n}x-A_{m}x\Vert>\delta_{0}$
infinitely often, \emph{with the further restriction}\footnote{Note, here, that we are invoking the the fact that a sequence in a
metric space converges iff, for any $\beta:\mathbb{N}\rightarrow\mathbb{N}$
with $\beta(n)>n$, the sequence contains only finitely many $\delta$-fluctuations
at distance $\beta$, for each $\delta>0$; cf. discussion of this
equivalence in the remark immediately following Definition \ref{def:jumps.}.} that $m\geq\beta(n,\eta/(3\Vert x\Vert)$. Now, pick $\eta$ and
$\varepsilon_{0}$ small enough that $2\eta+\varepsilon_{0}<u(\delta_{0})$,
and pick both $n$ and $m$ such that: $m$ and $n$ are sufficiently
large that $\Vert A_{n}x\Vert,\Vert A_{m}x\Vert<L+\varepsilon_{0}$
(which we can do since $\limsup_{K}\Vert A_{K}x\Vert\leq L$); and,
moreover, pick $m$ and $n$ in such a way that $\Vert A_{n}x-A_{m}x\Vert>\delta_{0}$
and $m\geq\beta(n,\eta/(3\Vert x\Vert))$. The computation above shows
that for $K$ larger than $\beta(m,\eta/(3\Vert x\Vert)$ and $\beta(n,\eta/(3\Vert x\Vert))$
we have that $\Vert A_{K}x\Vert<2\eta+L+\varepsilon_{0}-u(\delta_{0})$;
but since we chose $2\eta+\varepsilon_{0}<u(\delta_{0})$, this implies
that $\Vert A_{K}x\Vert<L$, which contradicts the definition of $L$.
\end{proof}
We now proceed to deriving a quantitative analogue of this result.
We first do so ``at distance $\beta$'', and then recover a global
bound in the case where the Følner sequence is fast (in the sense
of Definition \ref{def:fast}). The only really non-explicit part
of the proof of the preceding theorem was the step where we used the
fact that an infimum of a real sequence exists. Therefore the principal
innovation in what follows is the avoidance of the direct invocation
of this fact. 

Fix a nondecreasing Følner convergence modulus $\beta$ for $(F_{n})$.
Initially let us consider the case where $\Vert x\Vert\leq1$. Suppose
that $\Vert A_{n_{0}}x-A_{n_{1}}x\Vert\geq\varepsilon$. Moreover,
we pick some $\eta<u(\varepsilon)/2$, and suppose that $n_{1}\geq\beta(n_{0},\eta/3\Vert x\Vert)$.
Then the computation from the previous proof shows that
\[
\left\Vert \frac{1}{2}(A_{n_{0}}x+A_{n_{1}}x)\right\Vert <\Vert A_{n_{0}}x\Vert+\eta-u(\varepsilon).
\]
More generally, if $\Vert A_{n_{i}}x-A_{n_{i+1}}x\Vert\geq\varepsilon$
with $n_{i+1}\geq\beta(n_{i},\eta/3\Vert x\Vert)$, it follows that
\[
\left\Vert \frac{1}{2}(A_{n_{i}}x+A_{n_{i+1}}x)\right\Vert <\Vert A_{n_{i}}x\Vert+\eta-u(\varepsilon).
\]
Now, choosing $k\geq\max\{\beta(n_{i+1},\eta/(3\Vert x\Vert)),\beta(n_{i},\eta/(3\Vert x\Vert))\}$,
we have that both $\Vert A_{k}x-A_{k}A_{n_{i}}x\Vert<\eta$ and $\Vert A_{k}x-A_{k}A_{n_{i+1}}x\Vert<\eta$.
Thus, 
\begin{align*}
\Vert A_{k}x\Vert & =\left\Vert \frac{1}{2}(A_{k}x-A_{k}A_{n_{i}}x)+\frac{1}{2}(A_{k}x-A_{k}A_{n_{i+1}}x)+\frac{1}{2}(A_{k}A_{n_{i}}x+A_{k}A_{n_{i+1}}x)\right\Vert \\
 & \leq\eta+\left\Vert \frac{1}{2}A_{k}(A_{n_{i}}x+A_{n_{i+1}}x)\right\Vert \\
 & <2\eta+\Vert A_{n_{i}}x\Vert-u(\varepsilon).
\end{align*}
Therefore let $n_{i+2}$ equal the least index greater than $\max\{\beta(n_{i+1},\eta/(3\Vert x\Vert)),$
$\beta(n_{i},\eta/(3\Vert x\Vert))\}$ (equivalently, greater than
$\beta(n_{i+1},\eta/(3\Vert x\Vert))$, since $\beta$ is nondecreasing
in $n$) such that $\Vert A_{n_{i+1}}x-A_{n_{i+2}}x\Vert\geq\varepsilon$.
The previous calculation shows that $\Vert A_{n_{i+2}}x\Vert<\Vert A_{n_{i}}x\Vert+2\eta-u(\varepsilon)$.
More generally, we have that 
\[
\Vert A_{n_{i}}x\Vert<\Vert A_{n_{0}}x\Vert-\frac{i}{2}\left(u(\varepsilon)-2\eta\right)\quad i\text{ even}
\]
\[
\Vert A_{n_{i}}x\Vert<\Vert A_{n_{1}}x\Vert-\frac{i-1}{2}(u(\varepsilon)-2\eta)\quad i\text{ odd}
\]
So simply from the fact that $\Vert A_{n_{i}}x\Vert\geq0$, these
expressions derive a contradiction on the least $i$ such that 
\[
\max\left\{ \Vert A_{n_{0}}x\Vert,\Vert A_{n_{1}}x\Vert\right\} <\frac{i-1}{2}(u(\varepsilon)-2\eta)
\]
since this would imply that $\Vert A_{n_{i}}(x)\Vert<0$. That is,
the contradiction implies that the $n_{i}$th epsilon fluctuation
could not have occurred. We have no \emph{a priori} information on
the values of $\Vert A_{n_{0}}x\Vert$ and $\Vert A_{n_{1}}x\Vert$,
except that both are at most $\Vert x\Vert$. Therefore, we have the
following uniform bound: 
\[
i\leq\left\lfloor \frac{\Vert x\Vert}{\frac{1}{2}u(\varepsilon)-\eta}+1\right\rfloor 
\]
where $i$ tracks the indices of the subsequence along which $\varepsilon$-fluctuations
occur. This is actually \emph{one more} than the number of $\varepsilon$-fluctuations,
so instead we have that the number of $\varepsilon$-fluctuations
is bounded by $\left\lfloor \frac{\Vert x\Vert}{\frac{1}{2}u(\varepsilon)-\eta}\right\rfloor $.

If we happen to have any lower bound on the infimum of $\Vert A_{n}x\Vert$,
we can sharpen the previous calculation. Instead of using the fact
that for all $n$, $\Vert A_{n}x\Vert\geq0$, we use the fact that
$\Vert A_{n}x\Vert\geq L$ for some $L$. To wit, if $i$ is large
enough that 
\[
\Vert x\Vert<\frac{i-1}{2}(u(\varepsilon)-2\eta)+L
\]
then this would imply that $\Vert A_{n_{i}}x\Vert<L$, a contradiction.
Therefore we have the bound 
\[
i\leq\left\lfloor \frac{\Vert x\Vert-L}{\frac{1}{2}u(\varepsilon)-\eta}+1\right\rfloor 
\]
and so the number of $\varepsilon$-fluctuations is bounded by $\left\lfloor \frac{\Vert x\Vert-L}{\frac{1}{2}u(\varepsilon)-\eta}\right\rfloor $.

In the case where $\Vert x\Vert>1$, a small modification must be
made. We can make the substitution $x^{\prime}=x/\Vert x\Vert$, so
that $\Vert A_{n_{i}}x^{\prime}-A_{n_{i+1}}x^{\prime}\Vert\geq\varepsilon/\Vert x\Vert$,
and conclude (provided that $\eta<\frac{1}{2}u(\varepsilon/\Vert x\Vert)$
that $(A_{n}x^{\prime})$ has at most $\left\lfloor \frac{\Vert x^{\prime}\Vert}{\frac{1}{2}u(\varepsilon/\Vert x\Vert)-\eta}\right\rfloor $
$\varepsilon/\Vert x\Vert$-fluctuations at distance $\beta(n,\eta/3\Vert x^{\prime}\Vert)$,
or rather at most $\left\lfloor \frac{1}{\frac{1}{2}u(\varepsilon/\Vert x\Vert)-\eta}\right\rfloor $
$\varepsilon/\Vert x\Vert$-fluctuations at distance $\beta(n,\eta/3)$
since $\Vert x^{\prime}\Vert=1$. But since $\Vert A_{n_{i}}x^{\prime}-A_{n_{i+1}}x^{\prime}\Vert\geq\varepsilon/\Vert x\Vert$
iff $\Vert A_{n_{i}}x-A_{n_{i+1}}x\Vert\geq\varepsilon$, this actually
tells us that $(A_{n}x)$ has at most $\left\lfloor \frac{1}{\frac{1}{2}u(\varepsilon/\Vert x\Vert)-\eta}\right\rfloor $
$\varepsilon$-fluctuations at distance $\beta(n,\eta/3)$. Likewise,
if we happen to have a lower bound $L$ on the infimum of $\Vert A_{n}x\Vert$,
since $\inf_{n}\Vert A_{n}x\Vert\geq L$ iff $\inf_{n}\Vert A_{n}x^{\prime}\Vert\geq L/\Vert x\Vert$
we have that $(A_{n}x)$ has at most $\left\lfloor \frac{1-L/\Vert x\Vert}{\frac{1}{2}u(\varepsilon/\Vert x\Vert)-\eta}\right\rfloor $
$\varepsilon$-fluctuations at distance $\beta(n,\eta/3)$.

To summarise, we have shown that:
\begin{thm}
\emph{\label{thm:abstract Main theorem}(``Main theorem'')} Let
$\mathcal{B}$ be a uniformly convex Banach space with modulus $u$.
Fix $\varepsilon>0$ and $x\in\mathcal{B}$. Pick some $\eta>0$ such
that $\eta<\frac{1}{2}u(\varepsilon)$ if $\Vert x\Vert\leq1$, or
$\eta<\frac{1}{2}u(\varepsilon/\Vert x\Vert)$ if $\Vert x\Vert>1$.
Then if $G$ is a lcsc amenable group that acts Borel strongly on
$\mathcal{B}$ via the representation $\pi:G\rightarrow\mathcal{L}_{1}(\mathcal{B},\mathcal{B})$,
with Følner sequence $(F_{n})$, the sequence $(A_{n}x)$ has at most
$\left\lfloor \frac{\Vert x\Vert}{\frac{1}{2}u(\varepsilon)-\eta}\right\rfloor $
$\varepsilon$-fluctuations at distance $\beta(n,\eta/3\Vert x\Vert)$
if $\Vert x\Vert\leq1$, or at most $\left\lfloor \frac{1}{\frac{1}{2}u(\varepsilon/\Vert x\Vert)-\eta}\right\rfloor $
$\varepsilon$-fluctuations at distance $\beta(n,\eta/3)$ if $\Vert x\Vert>1$.
If we know that $\inf\Vert A_{n}x\Vert\geq L$, then we can sharpen
the bound to $\left\lfloor \frac{\Vert x\Vert-L}{\frac{1}{2}u(\varepsilon)-\eta}\right\rfloor $
in the case where $\Vert x\Vert\leq1$, or $\left\lfloor \frac{1-L/\Vert x\Vert}{\frac{1}{2}u(\varepsilon/\Vert x\Vert)-\eta}\right\rfloor $
in the case where $\Vert x\Vert>1$.
\end{thm}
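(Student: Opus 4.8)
The plan is to convert each $\varepsilon$-fluctuation at distance $\beta$ into a quantitative decrease of the scalar sequence $\Vert A_nx\Vert$, and then to use that this sequence is bounded below (by $0$, or by $L$ when a lower bound is assumed) to cap the number of such decreases. First I would reduce to the normalised case $\Vert x\Vert\le 1$: if $\Vert x\Vert>1$ set $x'=x/\Vert x\Vert$ and observe that $\Vert A_{n_i}x-A_{n_{i+1}}x\Vert\ge\varepsilon$ holds iff $\Vert A_{n_i}x'-A_{n_{i+1}}x'\Vert\ge\varepsilon/\Vert x\Vert$, so the normalised bound applied with $\varepsilon/\Vert x\Vert$ in place of $\varepsilon$ and distance $\beta(n,\eta/3)$ (using $\Vert x'\Vert=1$) transfers verbatim, and likewise $\inf_n\Vert A_nx\Vert\ge L$ becomes $\inf_n\Vert A_nx'\Vert\ge L/\Vert x\Vert$.

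So assume $\Vert x\Vert\le 1$ and let $n_0<n_1<n_2<\cdots$ be any finite sequence of indices with $\Vert A_{n_i}x-A_{n_{i+1}}x\Vert\ge\varepsilon$ and $n_{i+1}\ge\beta(n_i,\eta/(3\Vert x\Vert))$ for all admissible $i$. The core estimate is that $\Vert A_{n_{i+2}}x\Vert<\Vert A_{n_i}x\Vert+2\eta-u(\varepsilon)$, and I would establish it exactly as in the abstract mean ergodic theorem above: (i) apply the modulus $u$ to the pair $A_{n_i}x,A_{n_{i+1}}x$, both of norm $\le 1$, getting $\Vert\tfrac{1}{2}(A_{n_i}x+A_{n_{i+1}}x)\Vert\le\max\{\Vert A_{n_i}x\Vert,\Vert A_{n_{i+1}}x\Vert\}-u(\varepsilon)$; (ii) invoke Lemma \ref{lem:Laverage-of-averages} with parameter $\eta/(3\Vert x\Vert)$, so that $K\ge\beta(N,\eta/(3\Vert x\Vert))$ gives $\Vert A_Kx-A_KA_Nx\Vert<3\cdot\tfrac{\eta}{3\Vert x\Vert}\cdot\Vert x\Vert=\eta$, used first with $N=n_i$, $K=n_{i+1}$ to obtain $\Vert A_{n_{i+1}}x\Vert\le\Vert A_{n_{i+1}}A_{n_i}x\Vert+\eta\le\Vert A_{n_i}x\Vert+\eta$, so the maximum in (i) is $\le\Vert A_{n_i}x\Vert+\eta$; (iii) write $A_{n_{i+2}}x=\tfrac{1}{2}(A_{n_{i+2}}x-A_{n_{i+2}}A_{n_i}x)+\tfrac{1}{2}(A_{n_{i+2}}x-A_{n_{i+2}}A_{n_{i+1}}x)+\tfrac{1}{2}A_{n_{i+2}}(A_{n_i}x+A_{n_{i+1}}x)$, bound the first two summands by $\eta/2$ each via the lemma (with $N=n_i$ and $N=n_{i+1}$, $K=n_{i+2}$) and the last by $\Vert\tfrac{1}{2}(A_{n_i}x+A_{n_{i+1}}x)\Vert$ since $\Vert A_{n_{i+2}}\Vert\le1$; combining (i)--(iii) yields the core estimate. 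In the countable discrete case Lemma \ref{lem:Laverage-of-averages} has the sharper constant, so one may use $\beta(\cdot,\eta/\Vert x\Vert)$ instead, which leaves the final bound unchanged in form.

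Iterating the core estimate downward gives $\Vert A_{n_i}x\Vert<\Vert A_{n_0}x\Vert-\tfrac{i}{2}(u(\varepsilon)-2\eta)$ for $i$ even and $\Vert A_{n_i}x\Vert<\Vert A_{n_1}x\Vert-\tfrac{i-1}{2}(u(\varepsilon)-2\eta)$ for $i$ odd. Since $\eta<\tfrac{1}{2}u(\varepsilon)$, the increment $u(\varepsilon)-2\eta$ is strictly positive, while $\Vert A_{n_i}x\Vert\ge0$ and $\Vert A_{n_0}x\Vert,\Vert A_{n_1}x\Vert\le\Vert x\Vert$; hence the least $i$ with $\tfrac{i-1}{2}(u(\varepsilon)-2\eta)>\Vert x\Vert$ cannot be the index of an actual fluctuation, giving $i\le\lfloor\frac{\Vert x\Vert}{\frac{1}{2}u(\varepsilon)-\eta}+1\rfloor$. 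Since $i$ is one more than the number of fluctuations realised along $(n_j)$, this yields at most $\lfloor\frac{\Vert x\Vert}{\frac{1}{2}u(\varepsilon)-\eta}\rfloor$ many $\varepsilon$-fluctuations at distance $\beta(n,\eta/(3\Vert x\Vert))$. If $\Vert A_nx\Vert\ge L$ for all $n$, replacing $\Vert A_{n_i}x\Vert\ge0$ by $\Vert A_{n_i}x\Vert\ge L$ gives the sharpened count $\lfloor\frac{\Vert x\Vert-L}{\frac{1}{2}u(\varepsilon)-\eta}\rfloor$; the $\Vert x\Vert>1$ statements then follow from the rescaling reduction, with $\Vert x'\Vert=1$ and lower bound $L/\Vert x\Vert$.

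I expect the main difficulty to be bookkeeping rather than anything substantive: one must verify that the sole distance hypothesis in the definition of ``$\varepsilon$-fluctuations at distance $\beta$'', namely the consecutive conditions $n_{i+1}\ge\beta(n_i,\eta/(3\Vert x\Vert))$, already licenses every use of Lemma \ref{lem:Laverage-of-averages} in the argument --- in particular that it forces $n_{i+2}\ge\beta(n_{i+1},\cdot)$ and, since $\beta(\cdot,\varepsilon)$ is taken nondecreasing, also $n_{i+2}\ge\beta(n_i,\cdot)$, which is exactly what step (iii) requires --- and that the resulting count depends only on $\Vert x\Vert$, $\varepsilon$, $u$ and $\beta$, with no residual dependence on the particular indices $n_j$. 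The ordering requirement $\Vert x\Vert\le\Vert y\Vert\le1$ in the definition of $u$ is a secondary nuisance, but it is absorbed cleanly by the passage to $\max\{\Vert A_{n_i}x\Vert,\Vert A_{n_{i+1}}x\Vert\}\le\Vert A_{n_i}x\Vert+\eta$ carried out in step (ii).
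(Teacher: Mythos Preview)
Your proposal is correct and follows essentially the same approach as the paper: both arguments extract the core estimate $\Vert A_{n_{i+2}}x\Vert<\Vert A_{n_i}x\Vert+2\eta-u(\varepsilon)$ by combining uniform convexity with Lemma~\ref{lem:Laverage-of-averages} (applied at $N=n_i,n_{i+1}$ and $K=n_{i+1},n_{i+2}$, using that $\beta$ is nondecreasing), iterate along even and odd indices, and then read off the fluctuation count from nonnegativity (or the lower bound $L$), with the $\Vert x\Vert>1$ case handled by the obvious rescaling. Your presentation differs only cosmetically---you front-load the rescaling reduction and spell out step (ii) a bit more explicitly---but the logic is identical.
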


\begin{cor}
\label{cor:fast main theorem}In the above setting, suppose that $(F_{n})$
is $(\lambda,\eta/3\Vert x\Vert)$-fast if $\Vert x\Vert\leq1$, or
$(\lambda,\eta/3)$-fast if $\Vert x\Vert>1$ respectively. Then the
sequence $(A_{n}x)$ has at most $\lambda\cdot\left\lfloor \frac{\Vert x\Vert}{\frac{1}{2}u(\varepsilon)-\eta}\right\rfloor +\lambda$
$\varepsilon$-fluctuations (resp. $\lambda\cdot\left\lfloor \frac{1}{\frac{1}{2}u(\varepsilon/\Vert x\Vert)-\eta}\right\rfloor +\lambda$
$\varepsilon$-fluctuations).
\end{cor}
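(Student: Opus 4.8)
The plan is to adapt the energy estimate underlying Theorem~\ref{thm:abstract Main theorem} so that it directly controls ordinary $\varepsilon$-fluctuations, rather than to deduce the corollary from the ``at distance $\beta$'' bound by a purely combinatorial thinning argument (which, as explained at the end, does not work). As in the main theorem we may assume $\Vert x\Vert\le 1$; the case $\Vert x\Vert>1$ is then recovered by the rescaling $x'=x/\Vert x\Vert$ used there, noting that $\varepsilon$-fluctuations of $(A_nx)$ coincide with $\varepsilon/\Vert x\Vert$-fluctuations of $(A_nx')$ and that $\beta(\cdot,\eta/3\Vert x'\Vert)=\beta(\cdot,\eta/3)$, which is exactly where the $(\lambda,\eta/3)$-fast hypothesis enters. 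In the normalised case, being $(\lambda,\eta/3\Vert x\Vert)$-fast says precisely that $n\mapsto n+\lambda$ is a F\o{}lner convergence modulus for $(F_n)$ at parameter $\eta/3\Vert x\Vert$, and this is the only feature of fastness I would use.

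First I would record two facts. (A) For every $n$ and every $k\ge n+\lambda$, Lemma~\ref{lem:Laverage-of-averages} together with the unit-norm hypothesis gives $\Vert A_kx-A_kA_nx\Vert<3\cdot\frac{\eta}{3\Vert x\Vert}\cdot\Vert x\Vert=\eta$, hence $\Vert A_kx\Vert\le\Vert A_nx\Vert+\eta$: the norms are ``$\eta$-nonincreasing past the horizon $n+\lambda$''. (B) If $a<b$ and $\Vert A_ax-A_bx\Vert\ge\varepsilon$, then for every $k\ge b+\lambda$ one has $\Vert A_kx\Vert<\max\{\Vert A_ax\Vert,\Vert A_bx\Vert\}+\eta-u(\varepsilon)$; this is obtained exactly as in the proof of Theorem~\ref{thm:abstract Main theorem}, by combining the uniform convexity inequality $\Vert\frac{1}{2}(A_ax+A_bx)\Vert\le\max\{\Vert A_ax\Vert,\Vert A_bx\Vert\}-u(\varepsilon)$ with two applications of (A) at the index $k$ and the contractivity $\Vert A_k\Vert\le 1$. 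The only modification relative to the main theorem is that I keep the maximum of the two norms rather than bounding $\Vert A_bx\Vert$ by $\Vert A_ax\Vert+\eta$, since the latter would require $b\ge a+\lambda$, which fails for ``short'' ordinary jumps.

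Now let $m_1<m_2<\dots<m_k$ be an ordinary $\varepsilon$-fluctuation witness for $(A_nx)$. I would select ``super-jumps'' greedily: put $(p_1,q_1)=(m_1,m_2)$; having chosen $(p_t,q_t)$, let $p_{t+1}$ be the first $m_j$ with $m_j\ge q_t+\lambda$ (if it exists) and $q_{t+1}$ the next $m_j$ after it. Then $q_{t+1}>p_{t+1}\ge q_t+\lambda$, so applying (B) to the pair $(p_t,q_t)$ and feeding its conclusion into the hypotheses for the pair $(p_{t+1},q_{t+1})$ gives, by induction on the number $s$ of super-jumps, $\Vert A_{q_s+\lambda}x\Vert<\Vert x\Vert-s(u(\varepsilon)-\eta)$; since the left side is nonnegative and $u(\varepsilon)-\eta>0$ (because $\eta<\frac{1}{2}u(\varepsilon)$), this bounds $s<\Vert x\Vert/(u(\varepsilon)-\eta)$. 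On the other hand, the witness indices strictly between $q_t$ and $p_{t+1}$ all lie in $(q_t,q_t+\lambda)$, so at most $\lambda-1$ of them are skipped at each stage, each super-jump uses exactly two witness indices, and the portion of the witness past $q_s$ contributes at most $\lambda$ more; hence $k\le(\lambda+1)s+1$. Combining the two estimates — and using $u(\varepsilon)-\eta>u(\varepsilon)-2\eta=2(\frac{1}{2}u(\varepsilon)-\eta)$ and $\lambda\ge1$ — a short computation shows the number of ordinary $\varepsilon$-fluctuations is at most $\lambda\lfloor\frac{\Vert x\Vert}{\frac{1}{2}u(\varepsilon)-\eta}\rfloor+\lambda$, as claimed; the $\Vert x\Vert>1$ case transfers verbatim by the rescaling above.

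The step I expect to be the real obstacle is recognising that the tempting shortcut is unavailable: one would like to thin an ordinary witness by keeping one point every $\lambda$ terms, declare the result a witness ``at distance $n+\lambda$'', and simply invoke Theorem~\ref{thm:abstract Main theorem}. This fails because the averaging operators $A_k$ are contractions, so $\Vert A_{m_i}x-A_{m_j}x\Vert$ can be well below $\varepsilon$ for non-consecutive $i,j$ even when every consecutive pair in the witness is $\varepsilon$-separated; indeed one can exhibit abstract sequences with arbitrarily many ordinary $\varepsilon$-fluctuations and no $\varepsilon$-fluctuation at all at distance $n+\lambda$. For this reason the sparsification has to be carried out inside the energy argument, which only ever invokes uniform convexity on consecutive — hence genuinely $\varepsilon$-separated — witness points; and the price of losing the inequality $\Vert A_{m_{i+1}}x\Vert\le\Vert A_{m_i}x\Vert+\eta$ for short jumps is precisely the bookkeeping with the running maximum of the norms, which fact (A) keeps under control once one is past a horizon.
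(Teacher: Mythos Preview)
Your proposal is correct and, interestingly, more careful than the paper's own argument. The paper's proof of this corollary is precisely the ``thinning shortcut'' you anticipate and reject: it invokes Theorem~\ref{thm:abstract Main theorem} to get at most $N:=\lfloor\Vert x\Vert/(\tfrac12 u(\varepsilon)-\eta)\rfloor$ fluctuations at distance $n+\lambda$, and then asserts that ordinary fluctuations can only hide in ``$N$ many gaps of width $\lambda$'' plus one terminal gap, giving $\lambda N+\lambda$. Your objection to this is well taken: as a purely combinatorial reduction it fails, since one can build finite metric spaces (e.g.\ $d(x_i,x_{i+1})=1$ and $d(x_i,x_j)=1-\delta$ for $|i-j|\ge2$) with arbitrarily long ordinary $1$-fluctuation witnesses and no $1$-fluctuations at distance $2$ at all. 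So the paper's two-line proof is, at best, elliptical; what is really needed is to re-enter the energy estimate, and your Facts~(A)--(B) together with the greedy super-jump extraction do exactly that. The key point you isolate---that for ``short'' jumps one must keep $\max\{\Vert A_ax\Vert,\Vert A_bx\Vert\}$ rather than $\Vert A_ax\Vert+\eta$, and that Fact~(A) then controls this maximum once one is past the horizon $q_t+\lambda$---is the substantive content missing from the paper's argument.

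One small caveat: your ``short computation'' at the end is slightly loose. From $k\le(\lambda+1)s+1$ and $2s\le N$ one gets $k\le(\lambda+1)N/2+1\le\lambda(N+1)$, with equality possible when $\lambda=1$; under the paper's (admittedly idiosyncratic) convention that ``at most $M$ fluctuations'' means witness length strictly below $M$, this is off by one. The discrepancy is harmless for the substance of the result, and your energy drop $u(\varepsilon)-\eta$ is in fact sharper than the paper's $u(\varepsilon)-2\eta$, so a tiny refinement of the bookkeeping recovers the stated constant.
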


\begin{proof}
We know from the theorem that, if $\Vert x\Vert\leq1$, there are
at most $\left\lfloor \frac{\Vert x\Vert}{\frac{1}{2}u(\varepsilon)-\eta}\right\rfloor $
$\varepsilon$-fluctuations at distance $\lambda$. This leaves the
possibility that there are some $\varepsilon$-fluctuations in the
$\left\lfloor \frac{\Vert x\Vert}{\frac{1}{2}u(\varepsilon)-\eta}\right\rfloor $
many gaps of width $\lambda$, and also that there are some $\varepsilon$-fluctuations
in between the last possible index $n_{i}$ given by the previous
theorem, and the index $n_{i+1}$ at which contradiction is achieved.
This last interval at the end is at most $\lambda$ wide as well.
The reasoning for the $\Vert x\Vert>1$ case is identical.
\end{proof}
\begin{example}
We mention two special cases of interest in the above setting. For
brevity we restrict ourselves to the case where $\Vert x\Vert\leq1$
and $L=0$, but the reader can make the obvious substitutions.

(1) In the case where $\mathcal{B}$ is $p$-uniformly convex with
constant $K$, then in this setting $(A_{n}x)$ has at most $\lambda\cdot\left\lfloor \frac{\Vert x\Vert}{\frac{K}{2}\varepsilon^{p+1}-\eta}\right\rfloor +\lambda$
$\varepsilon$-fluctuations.

(2) In the case where $\mathcal{B}=L^{p}(\mu)$ for $p\in[2,\infty)$,
then in this setting $(A_{n}x)$ has at most $\lambda\cdot\left\lfloor \frac{\Vert x\Vert}{\frac{\varepsilon}{4}-\frac{\varepsilon}{4}\left(1-\left(\frac{\varepsilon}{2}\right)^{p}\right)^{1/p}-\eta}\right\rfloor +\lambda$
$\varepsilon$-fluctuations.
\end{example}

\section{Discussion}

The prospect of obtaining quantitative convergence information for
ergodic averages via a modification of Garrett Birkhoff's argument
\cite{birkhoff1939} has been previously explored, in the $\mathbb{Z}$-action
setting, by Kohlenbach and Leu\c{s}tean \cite{kohlenbach2009quantitative}
and subsequently by Avigad and Rute \cite{avigad2015oscillation}. 

Our proof was carried out in the setting where the acted upon space
was assumed to be uniformly convex, and indeed our bound on the number
of fluctuations explicitly depends on the modulus of uniform convexity.
Nonetheless, it is natural to ask whether an analogous result might
be obtained for a more general class of acted upon spaces.

However, it has already been observed, in the case where $G=\mathbb{Z}$,
that there exists a separable, reflexive, and strictly convex Banach
space $\mathcal{B}$ such that for every $N$ and $\varepsilon>0$,
there exists an $x\in\mathcal{B}$ such that $(A_{n}x)$ has at least
$N$ $\varepsilon$-fluctuations \cite{avigad2015oscillation}. This
counterexample applies equally to bounds on the rate of metastability
(for more on metastability and its relationship with fluctuation bounds,
see for instance \cite[Section 5]{avigad2015oscillation}).\textcolor{red}{{}
}However, this counterexample does not directly eliminate the possibility
of a fluctuation bound for $\mathcal{B}=L^{1}(X,\mu)$, so the question
of a ``quantitative $L^{1}$ mean ergodic theorem for amenable groups''
remains unresolved. 

What about the choice of acting group? Our assumptions on $G$ (amenable
and countable discrete or lcsc) where selected because this is the
most general class of groups which have Følner sequences. Our argument
depends essentially on Følner sequences; indeed, proofs of ergodic
theorems for actions of non-amenable groups have a qualitatively different
structure. Remarkably, there are certain classes of non-amenable groups
whose associated ergodic theorems have much stronger convergence behaviour
than the classical ($G=\mathbb{Z}$) setting; for recent progress
on quantitative ergodic theorems in the non-amenable setting, we refer
the reader to the book and survey article of Gorodnik and Nevo \cite{gorodnik2009ergodic,gorodnik2015quantitative}.

We should also mention quantitative bounds for \emph{pointwise} ergodic
theorems. For $G=\mathbb{Z}$ such results go as far back as Bishop's
upcrossing inequality \cite{bishop1968constructive}. Inequalities
of this type have also been found for $\mathbb{Z}^{d}$ by Kalikow
and Weiss (for $F_{n}=[-n,n]^{d}$) \cite{kalikow1999fluctuations};
more recently Moriakov has modified the Kalikow and Weiss argument
to give an upcrossing inequality for symmetric ball averages in groups
of polynomial growth \cite{moriakov2018fluctuations}, and (simultaneously
with the preparation of this article) Gabor \cite{gabor2019fluctuations}
has extended this strategy to give an upcrossing inequality for countable
discrete amenable groups with Følner sequences satisfying a strengthening
of Lindenstrauss's temperedness condition.

For both norm and pointwise convergence of ergodic averages, it is
sometimes possible to deduce convergence behaviour which is stronger
than $\varepsilon$-fluctuations and/or upcrossings but weaker than
an explicit rate of convergence, namely that there exists an appropriate
\emph{variational inequalit}y. Jones et al. have succeeded in proving
numerous variational inequalities, both for norm and pointwise convergence,
for a large class of Følner sequences in $\mathbb{Z}$ and $\mathbb{Z}^{d}$
\cite{jones1998oscillation,jones2003oscillation}. (In particular,
the result proved in the present article is known not to be sharp
when specialised to the case where $G=\mathbb{Z}^{d}$ and $\mathcal{B}=L^{p}$,
due to these existing results.) However, their methods, which rely
on a martingale comparison estimate and a Calderón-Zygmund decomposition
argument, exploit numerous incidental geometric properties of $\mathbb{Z}^{d}$
which do not hold for many other groups. It would be interesting to
determine which other groups enjoy similar variational inequalities.

\section*{Acknowledgements}

The author thanks his advisor Jeremy Avigad, under whom this work
was completed, for his guidance and support. The author also thanks
Clinton Conley, Yves Cornulier, and Henry Towsner for helpful discussions.
In addition, the author thanks the anonymous referee, whose comments
led to a number of improvements to the exposition of the article.

\bibliographystyle{amsplain}
\bibliography{amenableergodic}

\providecommand{\bysame}{\leavevmode\hbox to3em{\hrulefill}\thinspace}
\providecommand{\MR}{\relax\ifhmode\unskip\space\fi MR }
\providecommand{\MRhref}[2]{%
  \href{http://www.ams.org/mathscinet-getitem?mr=#1}{#2}
}
\providecommand{\href}[2]{#2}
\begin{thebibliography}{10}

\bibitem{avigad2015oscillation}
Jeremy Avigad and Jason Rute, \emph{Oscillation and the mean ergodic theorem
  for uniformly convex {B}anach spaces}, Ergodic Theory and Dynamical Systems
  \textbf{35} (2015), no.~4, 1009--1027.

\bibitem{birkhoff1939}
Garrett Birkhoff, \emph{The mean ergodic theorem}, Duke Mathematical Journal
  \textbf{5} (1939), no.~1, 19,20.

\bibitem{bishop1968constructive}
Errett Bishop, \emph{A constructive ergodic theorem}, Journal of Mathematics
  and Mechanics \textbf{17} (1968), no.~7, 631--639.

\bibitem{cavaleri2017computability}
Matteo Cavaleri, \emph{Computability of {F}{\o}lner sets}, International
  Journal of Algebra and Computation \textbf{27} (2017), no.~07, 819--830.

\bibitem{cavaleri2018folner}
\bysame, \emph{F{\o}lner functions and the generic word problem for finitely
  generated amenable groups}, Journal of Algebra \textbf{511} (2018), 388--404.

\bibitem{clarkson1936uniformly}
James~A. Clarkson, \emph{Uniformly convex spaces}, Transactions of the American
  Mathematical Society \textbf{40} (1936), no.~3, 396--414.

\bibitem{einsiedler2010ergodic}
Manfred Einsiedler and Thomas Ward, \emph{Ergodic theory: with a view towards
  number theory}, vol. 259, Springer, 2010.

\bibitem{gabor2019fluctuations}
Uri Gabor, \emph{Fluctuations of ergodic averages for amenable group actions},
  arXiv preprint arXiv:1902.07912 (2019).

\bibitem{gorodnik2009ergodic}
Alexander Gorodnik and Amos Nevo, \emph{The ergodic theory of lattice
  subgroups}, Princeton University Press, 2009.

\bibitem{gorodnik2015quantitative}
\bysame, \emph{Quantitative ergodic theorems and their number-theoretic
  applications}, Bulletin of the American Mathematical Society \textbf{52}
  (2015), no.~1, 65--113.

\bibitem{greenleaf1973ergodic}
Frederick~P. Greenleaf, \emph{Ergodic theorems and the construction of summing
  sequences in amenable locally compact groups}, Communications on Pure and
  Applied Mathematics \textbf{26} (1973), no.~1, 29--46.

\bibitem{hanner1956uniform}
Olof Hanner, \emph{On the uniform convexity of ${L}^p$ and $l^p$}, Arkiv
  f{\"o}r Matematik \textbf{3} (1956), no.~3, 239--244.

\bibitem{hytonen2016analysis}
Tuomas Hyt{\"o}nen, Jan van Neerven, Mark Veraar, and Lutz Weis, \emph{Analysis
  in {B}anach spaces: Volume {I}: Martingales and {L}ittlewood-{P}aley theory},
  vol.~63, Springer, 2016.

\bibitem{jones1998oscillation}
Roger~L. Jones, Robert Kaufman, Joseph~M. Rosenblatt, and M{\'a}t{\'e} Wierdl,
  \emph{Oscillation in ergodic theory}, Ergodic Theory and Dynamical Systems
  \textbf{18} (1998), no.~4, 889--935.

\bibitem{jones2003oscillation}
Roger~L. Jones, Joseph~M. Rosenblatt, and M{\'a}t{\'e} Wierdl,
  \emph{Oscillation in ergodic theory: higher dimensional results}, Israel
  Journal of Mathematics \textbf{135} (2003), no.~1, 1--27.

\bibitem{kalikow1999fluctuations}
Steven Kalikow and Benjamin Weiss, \emph{Fluctuations of ergodic averages},
  Illinois Journal of Mathematics \textbf{43} (1999), no.~3, 480--488.

\bibitem{kohlenbach2009quantitative}
Ulrich Kohlenbach and Lauren{\c{t}}iu Leu{\c{s}}tean, \emph{A quantitative mean
  ergodic theorem for uniformly convex {B}anach spaces}, Ergodic Theory and
  Dynamical Systems \textbf{29} (2009), no.~6, 1907--1915.

\bibitem{kohlenbach2014fluctuations}
Ulrich Kohlenbach and Pavol Safarik, \emph{Fluctuations, effective learnability
  and metastability in analysis}, Annals of Pure and Applied Logic \textbf{165}
  (2014), no.~1, 266--304.

\bibitem{krengel1978speed}
Ulrich Krengel, \emph{On the speed of convergence in the ergodic theorem},
  Monatshefte f{\"u}r Mathematik \textbf{86} (1978), no.~1, 3--6.

\bibitem{lindenstrauss2001pointwise}
Elon Lindenstrauss, \emph{Pointwise theorems for amenable groups}, Inventiones
  Mathematicae \textbf{146} (2001), no.~2, 259--295.

\bibitem{moriakov2018fluctuations}
Nikita Moriakov, \emph{Fluctuations of ergodic averages for actions of groups
  of polynomial growth}, Studia Mathematica \textbf{240} (2018), no.~3,
  255--273.

\bibitem{moriakov2018effective}
Nikita Moriakov, \emph{On effective {B}irkhoff's ergodic theorem for computable
  actions of amenable groups}, Theory of Computing Systems \textbf{62} (2018),
  no.~5, 1269--1287.

\bibitem{ornstein1987entropy}
Donald~S. Ornstein and Benjamin Weiss, \emph{Entropy and isomorphism theorems
  for actions of amenable groups}, Journal d'Analyse Math{\'e}matique
  \textbf{48} (1987), no.~1, 1--141.

\end{thebibliography}

\end{document}